\pdfoutput=1
\documentclass{article}
\usepackage{iclr2027_arxiv,times}

\usepackage{amsmath,amssymb,amsthm,mathtools}
\usepackage{enumitem}
\usepackage{algorithm}
\usepackage{algpseudocode}
\algrenewcommand\algorithmicrequire{\textbf{Input:}}
\algrenewcommand\algorithmicensure{\textbf{Output:}}
\usepackage{graphicx}
\usepackage{float}
\usepackage{placeins}
\usepackage{booktabs,array}
\usepackage{multirow}
\usepackage{wrapfig}
\usepackage{microtype}
\usepackage{xurl}
\usepackage{hyperref}
\usepackage[font=small,labelfont=bf]{caption}
\usepackage{subcaption}

\hypersetup{
    colorlinks=true,
    citecolor=blue,
    linkcolor=blue,
    urlcolor=blue
}

\graphicspath{{figures/}}

\newcommand{\R}{\mathbb{R}}

\newcommand{\E}{\mathbb{E}}

\newcommand{\sign}{\operatorname{sign}}

\newcommand{\clamp}{\operatorname{clamp}}
\newcommand{\dd}{\,\mathrm{d}}

\newcommand{\one}{\mathbf{1}}
\newcommand{\norm}[1]{\lVert #1\rVert}
\newcommand{\ip}[2]{\left\langle #1,#2\right\rangle}

\theoremstyle{definition}

\theoremstyle{plain}
\newtheorem{theorem}{Theorem}
\newtheorem{lemma}[theorem]{Lemma}
\newtheorem{corollary}[theorem]{Corollary}
\newtheorem{proposition}[theorem]{Proposition}
\newtheorem{assumption}{Assumption}
\newtheorem{model}{Model}

\theoremstyle{remark}
\newtheorem{remark}{Remark}

\title{Beyond Shadow Weights: Quantization-Aware Training as Quantized-Endpoint Descent}

\newcommand{\authmark}[1]{\textsuperscript{\normalfont\mdseries #1}}
\author{%
Sheng-An Xu\authmark{1} \quad
Hanyang Li\authmark{1} \quad
Jianhao Ma\authmark{2} \quad
Ying Cui\authmark{1}\\[0.5ex]
{\normalfont\footnotesize \authmark{1}Department of Industrial Engineering and Operations Research, University of California, Berkeley}\\
{\normalfont\footnotesize \authmark{2}Department of Industrial Engineering, Tsinghua University}
}

\begin{document}

\maketitle

\begin{abstract}
Quantization-aware training (QAT) updates a full-precision shadow weight $\mathbf{x}$ but deploys the quantized endpoint $Q(\mathbf{x})$.  Existing explanations for QAT largely view its success through the lens of shadow weights: QAT can move $\mathbf{x}$ toward flatter basins, gain robustness from quantization-induced oscillations, or balance the shadow loss $f(\mathbf{x})$ against the quantization error $\|\mathbf{x}-Q(\mathbf{x})\|_2$.  These perspectives do not directly explain the empirical observation that the deployed endpoint loss $f(Q(\mathbf{x}))$ improves while the shadow loss $f(\mathbf{x})$ does not, and can even increase substantially. In this paper, we offer a different explanation by treating QAT as finite-grid endpoint dynamics. Motivated by the approximate normality of rescaled pretrained weights, we propose an idealized model for the residual phase, which records where each shadow weight sits inside its quantization cell as a fraction of the cell width. This model leads to a crossing law that determines which coordinates cross quantization boundaries after a shadow update. Inspired by the idealized model and signal-imbalance phenomenon in QAT, we further propose \emph{QAR (Quantization with Amplified Routing)}, an algorithmic framework that directly operates on the quantization code. In contrast to QAT, QAR is both theoretically grounded and memory-efficient: it admits feasible-gradient bounds for a family of power amplifiers up to unavoidable finite-grid floors without retaining a full-precision shadow weight copy. Experiments on post-training of large language models provide evidence consistent with the endpoint view and show that QAR can be comparable to or better than QAT with smaller memory cost.
\end{abstract}

\section{Introduction}
\label{sec:introduction}

Modern deep neural networks have achieved broad empirical success across language, vision, and decision-making tasks \citep{brown2020language,he2016deep,silver2016mastering}, by scaling parameters, data, and compute \citep{kaplan2020scaling,hoffmann2022training}. But this scaling comes at a cost. As the model becomes larger, deployment is increasingly constrained by memory footprint, and inference slows down due to high memory bandwidth and compute throughput. This phenomenon is highly pronounced for large language models (LLMs).  Low-bit quantization addresses this issue by replacing high-precision weights with a small set of representable values.  Post-training quantization methods compress a pretrained model with little additional training, and they have become a central tool for LLM deployment. Representative examples include GPTQ~\citep{frantar2022gptq}, SmoothQuant~\citep{xiao2023smoothquant}, and AWQ~\citep{lin2024awq}.  However, at more aggressive bit widths, the quantized model can lose accuracy substantially. In this regime, quantization-aware training (QAT) becomes the standard method for repair. For example, LLM-QAT~\citep{liu2023llmqat} and EfficientQAT~\citep{chen2025efficientqat} make this repair step practical for LLMs with data-free distillation or reduced memory cost.

Conceptually, QAT tries to solve the deployment problem. Let $f:\R^d\to\R$ be the full-precision loss, $Q_b$ be a $b$-bit quantizer, and $\mathcal Q_b$ be the finite set of deployed endpoints. To achieve the best deployment performance, the training objective should be a discrete constrained problem
\begin{equation}\label{eq:finite end}
    \min_{\mathbf{q}\in\mathcal Q_b} f(\mathbf{q}).
\end{equation}
By introducing a full-precision shadow weight $\mathbf{x}$, we can equivalently write the above problem as
\[
    \min_{\mathbf{x}\in\R^d}\; F_b(\mathbf{x})=f(Q_b(\mathbf{x})).
\]
Since $Q_b$ is constant on each quantization cell, the exact gradient of $f(Q_b(\mathbf{x}))$ is zero almost everywhere; direct differentiation therefore gives no useful descent direction. QAT bypasses this degeneracy through the straight-through estimator (STE), which treats $Q_b$ as the identity in the backward pass~\citep{bengio2013estimating}.  In the notation above, one step of QAT has the form
\[
    \mathbf{x}_{t+1}=\mathbf{x}_t-\eta_t \mathbf{u}_t,
    \qquad
    \mathbf{u}_t=\mathcal U_t(\nabla f(Q_b(\mathbf{x}_t))),
\]
where $\mathcal{U}_t(\cdot)$ is a time-dependent update map. STE-based QAT has been effective in practice~\citep{courbariaux2015binaryconnect,zhou2016dorefa,jacob2018quantization,choi2018pact,esser2020learned}, but it remains largely heuristic for solving the problem \eqref{eq:finite end}. From a classical optimization viewpoint, QAT is unusual because the variable being updated is not the variable being deployed. During training, QAT maintains a full-precision shadow weight $\mathbf{x}$, also called the latent weight, but at deployment the model uses the quantized endpoint $\mathbf{q}$. Thus, to analyze QAT, {\bf the central analytical question is not how the shadow trajectory behaves, but how the deployed endpoint trajectory evolves.} 

The main analytical difficulty lies in the discreteness induced by quantization. Standard first-order arguments alone fail to match the motion of the deployed endpoint. By $L$-smoothness of $f$, the classical gradient update $\mathbf{x}_{t+1}=\mathbf{x}_t-\eta\nabla f(\mathbf{x}_t)$ yields $f(\mathbf{x}_{t+1})-f(\mathbf{x_t}) \le -\eta\big(1-\tfrac{L\eta}{2}\big)\norm{\nabla f(\mathbf{x_t})}_2^2$, so choosing $\eta$ small enough guarantees a descent in the objective value. However, the loss at the QAT endpoint $f(Q_b(\mathbf{x}_t))$ behaves differently. Although the change in shadow weights $\mathbf{x}_{t+1} - \mathbf{x}_t$ is proportional to the stepsize $\eta_t$, once $\mathbf{x}_{t+1}$ crosses a quantization boundary, the change in the deployed endpoint $Q_b(\mathbf{x}_{t+1}) - Q_b(\mathbf{x}_t)$ is a grid jump. Shrinking $\eta$ cannot reduce the size of this jump, hence descent of the endpoint loss no longer holds.

\paragraph{Existing Explanations.} For QAT with weight-only quantization,  explanations in the existing literature can be grouped into three categories. The first is a flatness view: QAT can drive the shadow weight into a flat region where replacing $\mathbf{x}$ by $Q_b(\mathbf{x})$ changes the loss only mildly. For example, \citet{javed2025qtdog} model quantization as adding noise for the shadow weight. In this way, QAT has an implicit regularizer for the second-order term and can steer the shadow point toward flatter regions. This perspective is inspired by the broader literature which links flatter regions of the loss landscape to better generalization~\citep{hochreiter1997flat,keskar2017largebatch,jiang2020fantastic,foret2021sharpness}. A related explanation builds on the river-valley picture of loss landscapes \citep{wen2024river}. For instance, \citet{li2026understandingqat} argue that when the deployed endpoint leaves a low-loss basin, the STE update can include an inward component that pulls the quantized endpoint back toward the basin, where $f(Q_b(\mathbf{x}))$ stays close to $f(\mathbf{x})$. The second view interprets QAT as minimizing the shadow loss $f(\mathbf{x})$ plus a regularizer that penalizes distance of the shadow weight to the quantized set. ProxQuant~\citep{bai2019proxquant} proposes a continuous regularized problem that encourages the shadow weight to approach the quantized set, and STE-based QAT corresponds to the hard-projection limiting case of the proximal framework.
The third view emphasizes oscillation robustness gained from quantization. In a toy model, \citet{wenshoj2025oscillations} argue that the STE update contains a residual-driven component that pushes shadow weights away from their nearest quantization level and toward cell boundaries. And this effect can be achieved by adding a regularization term to the shadow loss $f(\mathbf{x})$.

\begin{figure}[t]
\vspace{-1.2em}
\centering
\begin{subfigure}[t]{0.28\linewidth}
    \centering
    \includegraphics[width=\linewidth]{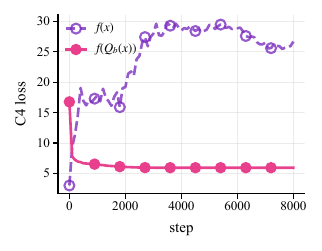}
\caption{2-bit evaluation loss}
\end{subfigure}
\begin{subfigure}[t]{0.32\linewidth}
    \centering
    \includegraphics[width=\linewidth]{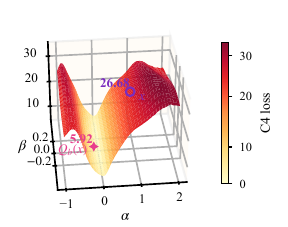}
\caption{Loss landscape at the final step}
\end{subfigure}
\begin{subfigure}[t]{0.30\linewidth}
    \centering
    \includegraphics[width=\linewidth]{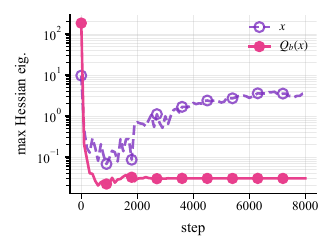}
\caption{2-bit max Hessian eigenvalue}
\end{subfigure}
\caption{Qwen-2.5-0.5B 2-bit loss and curvature diagnostics using seed 0. (a) contrasts the increasing shadow loss with the low endpoint loss. (b) plots evaluation loss on $Q_b(\mathbf{x})+\alpha(\mathbf{x}-Q_b(\mathbf{x}))+\beta \norm{\mathbf{x}-Q_b(\mathbf{x})}_2\mathbf v$ for a random unit $\mathbf v\perp(\mathbf{x}-Q_b(\mathbf{x}))$. (c) shows curvature better aligned with the deployed endpoint in 2 bits.}
\label{fig:qwen25-loss-diagnostics}
\vspace{-1.2em}
\end{figure}

Figure~\ref{fig:qwen25-loss-diagnostics} illustrates a regime that is not described by any account requiring the shadow loss $f(\mathbf{x}_t)$ to decrease.  In this 2-bit Qwen-2.5-0.5B run, the loss at the shadow weight $f(\mathbf{x}_t)$ increases substantially while the deployed endpoint loss $f(Q_b(\mathbf{x}_t))$ remains much lower. Moreover, the loss landscape around the deployed endpoint $Q_b(\mathbf{x}_t)$ appears to be flatter, rather than around $\mathbf{x}_t$. These observations suggest a stronger conclusion: QAT can improve the deployed quantized endpoint, even at the expense of sacrificing the performance of the full-precision objective. This motivates us to analyze the deployed endpoint loss directly.

\paragraph{Contribution.} Our contributions are two-fold. First, we analyze QAT through the residual $\mathbf{r}_t=\mathbf{x}_t-Q_b(\mathbf{x}_t)$, which records the position of the shadow point inside its quantization cell. Motivated by the approximate normality of the rescaled pretrained weights, we introduce an idealized residual-phase model that explains why pooled phases can approach uniformity at higher bit widths. This model expresses endpoint motion through random boundary-crossing indicators, allowing the dynamics to be analyzed directly at the quantized endpoint without explicitly tracking the shadow weights. It offers an approximate perspective on when QAT can help training: the bound certifies expected endpoint descent when the first-order signal exceeds the upper bound on the finite-jump penalty.

Second, we identify asymmetric oscillations caused by signal imbalance in QAT and use this mechanism to develop \textbf{QAR} (\textbf{Q}uantization with \textbf{A}mplified \textbf{R}outing), a memory-efficient endpoint-training framework that updates quantized codes directly and prioritizes transitions associated with stronger signals. We establish convergence guarantee for QAR and characterize when signal amplification remains beneficial under stochastic gradient noise. Post-training experiments on Qwen-2.5-0.5B and Llama-3.2-3B support finite-grid endpoint view and show that QAR can achieve performance comparable to or better than QAT with up to $\textbf{26.3\%}$ lower peak training memory.

\section{Preliminaries}
\label{sec:preliminaries}

To begin with, quantization maps high-precision floating-point values to low-precision discrete representations, thereby reducing memory footprint.  We use the standard uniform affine quantization notation \citep{nagel2021whitepaper}. Throughout the paper, unless stated otherwise, we use a signed symmetric uniform $b$-bit quantizer $Q_b(\cdot)$.\footnote{In this convention, we drop the most negative code so that the positive and negative endpoints have equal magnitude. The resulting codebook has $2^b-1$ levels. For example, the $b=2$ setting in this paper has levels $\{-1,0,1\}$, so it corresponds to the ternary setting, or $1.58$-bit, in common terminology.}  Let $\mathbf{s}=(s_1,s_2,\dots,s_d)>0$ be the fixed local scale. Define $K_b=2^{b-1}-1$ and $\mathcal K_b=\{-K_b,-K_b+1,\ldots,K_b\}$. For shadow weight $\mathbf{x}$, the i-th coordinate of the deployed endpoint is
\[
    Q_b(\mathbf{x})_i
    =
    s_i\,
    \clamp\!\left(
        \left\lfloor \frac{x_i}{s_i}\right\rceil;\,-K_b,K_b
    \right),
\]
where $\lfloor\cdot\rceil$ denotes rounding to the nearest integer and $\clamp(a;\ell,u)=\min\{\max\{a,\ell\},u\}$. When the bit width is clear from context, we write $Q(\mathbf{x})$ for $Q_b(\mathbf{x})$ for simplicity. Then the deployed values in coordinate $i$ are $s_i\mathcal K_b$, with adjacent interior spacing $s_i$.  For an interior code $k\in\{-K_b+1,\ldots,K_b-1\}$, the non-saturated cell is $C_{i,k} = \left[ s_i\bigl(k-\frac1 2\bigr), s_i\bigl(k+\frac1 2\bigr) \right)$. We call coordinate $i$ non-saturated when its deployed code $k_i=Q(\mathbf{x})_i/s_i$ satisfies $-K_b<k_i<K_b$; coordinates with $k_i=\pm K_b$ are saturated.

Let $\mathbf{g}=\nabla f(\mathbf{q})$ be the endpoint gradient.  A one-step QAT shadow update with stepsize $\eta$ is
\begin{equation}
\label{eq:1step_QAT}
    \mathbf{x}^+=\mathbf{x}-\eta \mathbf{u}
    \quad
    \text{with }\mathbf{u}=\mathcal U_t(\mathbf{g}).
\end{equation}
Here $\mathcal U_t(\cdot)$ is the update rule applied to the endpoint gradient at step $t$. For example, GD takes $\mathcal{U}_t(\mathbf{g})=\mathbf{g}$ and signGD corresponds to $\mathcal{U}_t(\mathbf{g})=\sign(\mathbf{g})$. We further write $\mathbf{q}^+=Q(\mathbf{x}^+)$, and call coordinate $i$ active in this step when $u_i\ne0$.  Throughout, we use the convention $\sign(0)=0$.

\begin{assumption}[Bounded update direction]
\label{ass:update-direction}
There exists a constant $U <\infty$ such that the update direction $\mathbf{u}=\mathcal U_t(\mathbf{g})$ satisfying $\norm{\mathbf{u}}_\infty\le U$.
\end{assumption}

\begin{assumption}[Smoothness]
\label{ass:smooth}
The loss $f$ is differentiable, and there exists a nonnegative vector $\mathbf{L}=(L_1,\ldots,L_d)$ such that for all $\mathbf{x},\mathbf{y}$,
\[
    f(\mathbf{y})
    \le
    f(\mathbf{x})+\ip{\nabla f(\mathbf{x})}{\mathbf{y}-\mathbf{x}}
    +
    \frac12\sum_{i=1}^d L_i(y_i-x_i)^2 .
\]
This coordinatewise smoothness is commonly used in sign-gradient analyses such as~\citet{bernstein2018signsgd}.  Standard $L$-smoothness implies this assumption by taking $L_i=L$ for all coordinates.
\end{assumption}

\begin{lemma}[First-order descent alignment]
\label{lem:coordinatewise-crossing-sign}
For the one-step QAT update \eqref{eq:1step_QAT} at point $\mathbf{x}$ with stepsize $\eta$, we have $\mathbf{u} \odot (\mathbf{q}^+ - \mathbf{q}) \le0.$ If in addition ${\mathbf{g}}\odot {\mathbf{u}}\ge0$, then $\mathbf{g} \odot (\mathbf{q}^+ - \mathbf{q}) \le 0$, and consequently,
\[
    \ip{\mathbf{g}}{\mathbf{q}^+-\mathbf{q}}\le0.
\]
Here $\odot$ denotes the Hadamard product, and the inequalities should be understood coordinatewise.
\end{lemma}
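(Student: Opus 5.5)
The plan is to reduce everything to a scalar monotonicity fact about the quantizer. Because $Q$ acts coordinatewise, $q_i=Q(\mathbf{x})_i$ depends only on $x_i$, and $q_i^+=Q(\mathbf{x}^+)_i$ depends only on $x_i^+=x_i-\eta u_i$. So it suffices to show, for each coordinate separately, that $u_i(q_i^+-q_i)\le 0$.

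\textbf{Monotonicity of the scalar quantizer.} Define the scalar map
\[
\phi_i(z)=s_i\,\clamp\!\left(\left\lfloor z/s_i\right\rceil;-K_b,K_b\right).
\]
It is a composition of three maps, each nondecreasing:
\begin{itemize}
\item $z\mapsto z/s_i$, since $s_i>0$;
\item rounding to the nearest integer, with any fixed tie-breaking convention such as the half-open cells $C_{i,k}$;
\item the clamp $a\mapsto\min\{\max\{a,-K_b\},K_b\}$.
\end{itemize}
Hence $\phi_i$ is nondecreasing. The only point that needs care is the tie-breaking at half-integers. I will note that any deterministic rounding rule still satisfies $z\le z'\Rightarrow\lfloor z\rceil\le\lfloor z'\rceil$. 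With the cells $C_{i,k}$ as defined, ties round up consistently, so monotonicity holds.

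\textbf{First claim, by cases on the sign of $u_i$.} Fix $\eta\ge0$.
\begin{itemize}
\item If $u_i=0$, the product $u_i(q_i^+-q_i)$ is zero.
\item If $u_i>0$, then $x_i^+=x_i-\eta u_i\le x_i$. Monotonicity gives $q_i^+\le q_i$, so $u_i(q_i^+-q_i)\le0$.
\item If $u_i<0$, then $x_i^+\ge x_i$. Monotonicity gives $q_i^+\ge q_i$, and again the product is $\le0$.
\end{itemize}
This proves $\mathbf{u}\odot(\mathbf{q}^+-\mathbf{q})\le0$.

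\textbf{Second claim, from $g_iu_i\ge0$.} Again split on $u_i$.
\begin{itemize}
\item If $u_i\neq0$, then $g_i$ is either zero or has the same sign as $u_i$. If $g_i=0$, the term $g_i(q_i^+-q_i)$ is zero. Otherwise $g_i=c\,u_i$ for some $c>0$, so $g_i(q_i^+-q_i)=c\,u_i(q_i^+-q_i)\le0$ by the first claim.
\item If $u_i=0$, then $x_i^+=x_i$, so $q_i^+=q_i$ and the term vanishes regardless of $g_i$.
\end{itemize}
This gives $\mathbf{g}\odot(\mathbf{q}^+-\mathbf{q})\le0$. Summing over the coordinates $i$ yields $\ip{\mathbf{g}}{\mathbf{q}^+-\mathbf{q}}\le0$.

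The only possible obstacle is the rounding convention at exact half-integers. It is handled by fixing a monotone tie-break, which is consistent with the half-open cells $C_{i,k}$ used in the paper.
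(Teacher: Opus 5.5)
Your proof is correct and follows the same route as the paper's proof: monotonicity of the fixed coordinatewise quantizer, a case split on the sign of $u_i$, and then transferring the sign from $u_i$ to $g_i$ using $g_iu_i\ge0$. Your explicit handling of the half-integer tie-break and of the cases $u_i=0$ or $g_i=0$ makes precise what the paper leaves implicit.
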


The proof is deferred to Appendix~\ref{app:proof-lemma-alignment}. Lemma~\ref{lem:coordinatewise-crossing-sign} shows that each realized endpoint jump moves opposite to the update direction.  If the update is coordinatewise aligned with the endpoint gradient, the jump is also aligned with first-order descent.  However, to get the descent of the loss, we need a more fine-grained bound. To achieve the bound, we assume that the stepsize is small enough that $\eta |u_i|<s_i$ for every coordinate $i$. In this case, each coordinate crosses at most one interior threshold per step, so $(\mathbf{q}^+-\mathbf{q})_i\in\{-s_i,0,s_i\}$. Let $\mathbf{J}=(J_1,\ldots,J_d)$ with $J_i=\one\{q_i^+\ne q_i\}$ be the crossing indicator, where $J_i=1$ indicates that a one-cell endpoint jump occurs. Now we can describe the grid-jump mechanism of the QAT.

\begin{proposition}[Realized endpoint loss bound]
\label{prop:realized-crossing-descent}
Suppose Assumptions~\ref{ass:update-direction} and~\ref{ass:smooth} hold, and the stepsize is small enough that $\eta U< \min_i s_i$.  Then
\[
    \mathbf{q}^+-\mathbf{q}
    =
    -\mathbf{s}\odot\sign(\mathbf{u})\odot\mathbf{J},
\]
and
\begin{equation}
\label{eq:realized-crossing-descent-bound}
    f(\mathbf{q}^+)-f(\mathbf{q})
    \le
    -\sum_{\{i:J_i=1\}}s_i\sign(u_i)g_i
    +
    \frac12\sum_{\{i:J_i=1\}}L_i s_i^2 .
\end{equation}
\end{proposition}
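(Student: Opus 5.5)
The proof has two parts: first identify the endpoint jump coordinatewise, then plug it into Assumption~\ref{ass:smooth} at the base point $\mathbf{q}$.

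\textbf{Step 1: jump structure.} Fix a coordinate $i$ and write $\delta_i = x_i^+ - x_i = -\eta u_i$. By Assumption~\ref{ass:update-direction} and the stepsize condition, $|\delta_i| \le \eta U < \min_j s_j \le s_i$.

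The thresholds of $Q$ in coordinate $i$ are the half-integer points $s_i(k+\tfrac12)$, spaced exactly $s_i$ apart. The clamp only merges the outermost cells, which removes thresholds but never adds any. An open or half-open interval of length $<s_i$ therefore contains at most one threshold. So $x_i$ and $x_i^+$ lie in the same cell or in adjacent cells, which gives $q_i^+-q_i\in\{-s_i,0,s_i\}$.

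If $u_i=0$, then $x_i^+=x_i$, so $J_i=0$ and the claimed formula holds trivially with $\sign(0)=0$. If $u_i\neq0$ and $J_i=1$, then $Q$ is coordinatewise nondecreasing and adjacent deployed levels differ by $s_i$. Together with Lemma~\ref{lem:coordinatewise-crossing-sign}, which gives $u_i(q_i^+-q_i)\le0$, this forces $q_i^+-q_i=-s_i\sign(u_i)$. If $J_i=0$, both sides are $0$. This proves $\mathbf{q}^+-\mathbf{q}=-\mathbf{s}\odot\sign(\mathbf{u})\odot\mathbf{J}$.

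\textbf{Step 2: descent bound.} Apply Assumption~\ref{ass:smooth} with $\mathbf{x}=\mathbf{q}$ and $\mathbf{y}=\mathbf{q}^+$, using $\nabla f(\mathbf{q})=\mathbf{g}$. Substituting the jump formula:
\begin{itemize}
\item the linear term becomes $\ip{\mathbf{g}}{\mathbf{q}^+-\mathbf{q}}=-\sum_i s_i\sign(u_i)g_iJ_i$;
\item the quadratic term becomes $\tfrac12\sum_i L_i s_i^2\sign(u_i)^2J_i^2$.
\end{itemize}
Restrict both sums to $\{i:J_i=1\}$. On this set $u_i\ne0$, because inactive coordinates never cross, so $\sign(u_i)^2=1$. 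This yields \eqref{eq:realized-crossing-descent-bound} exactly.

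\textbf{Main care point.} The only delicate step is the at-most-one-crossing claim. The tie-breaking convention of $\lfloor\cdot\rceil$ makes cells half-open, and the saturated end cells are unbounded. I would handle both by arguing directly with the threshold set $\{s_i(k+\tfrac12):-K_b\le k\le K_b-1\}$ and the monotonicity of $Q$: the deployed value changes by the number of thresholds crossed times $s_i$, and a segment of length $<s_i$ crosses at most one threshold. Everything else is direct substitution.
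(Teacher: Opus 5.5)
Your proposal is correct and follows the paper's proof. The paper also first derives the coordinatewise identity $q_i^+-q_i=-s_i\sign(u_i)J_i$ from the small-step condition and the monotonicity of $Q$, then substitutes it into Assumption~\ref{ass:smooth} at $\mathbf{x}=\mathbf{q}$, $\mathbf{y}=\mathbf{q}^+$ with $(q_i^+-q_i)^2=J_i s_i^2$; your threshold-spacing argument only spells out the at-most-one-crossing step, which the paper asserts in a single line.
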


Proposition~\ref{prop:realized-crossing-descent} illustrates the finite-grid descent mechanism: once the crossed coordinates are known, the bound certifies endpoint descent whenever the first-order signal exceeds the second-order penalty. The proof is deferred to Appendix~\ref{app:proof-realized-endpoint}.

\section{Residual Homogeneity and Endpoint Motion}
\label{sec:residual-homogeneity}

The bound in Proposition~\ref{prop:realized-crossing-descent} controls the endpoint-loss change for a realized crossing set, but it does not specify how often each coordinate crosses. To quantitatively understand how the crossing mechanism of QAT aggregates to a steady decrease of the endpoint loss and to illustrate when QAT stops making progress, we also need to know how often such crossings occur. Thus, we model the crossing indicators $J_i$ conditionally on the current endpoint and update direction. The resulting conditional laws provide an approximate perspective on QAT by linking the frequency of boundary crossings to expected descent at the quantized endpoint.

\begin{model}[Conditional crossing model]
\label{model:residual-general}
Fix an endpoint $\mathbf{q}=Q(\mathbf{x})$, an update direction $\mathbf{u}$, and a stepsize $\eta$ in a regime where each active coordinate can cross at most one neighboring quantization boundary.  Let $J_i\in\{0,1\}$ be the crossing indicator defined above.  We model $J_i$, conditional on $(\eta,\mathbf{q},\mathbf{u})$, as a Bernoulli random variable with conditional mean
\[
    p_i(\eta,\mathbf{q},\mathbf{u})
    =
    \E[J_i\mid \eta,\mathbf{q},\mathbf{u}]
    =
    \Pr(J_i=1\mid \eta,\mathbf{q},\mathbf{u}) .
\]
The deterministic realized case is included by taking $p_i(\eta,\mathbf{q},\mathbf{u})\in\{0,1\}$.
\end{model}

To further illustrate the inner structure of the endpoint motion, we introduce the residual phase. For $\mathbf{q}=Q(\mathbf{x})$, define the residual $\mathbf{r} = \mathbf{x}-Q(\mathbf{x})$.
In a non-saturated cell, $|r_i|\le \frac{s_i}{2}$, and the normalized residual $\theta_i=r_i/s_i$ lies in $\left[-\frac1 2,\frac1 2\right)$.  This phase is the state variable that decides whether the next shadow update changes the deployed endpoint.  If $x_i$ is near the center of its cell, a small shadow update leaves $q_i$ unchanged; if $x_i$ is within a boundary window of width $\eta |u_i|$, the same update produces a one-cell endpoint jump.
We make this precise in Corollary~\ref{cor:crossing-probability} (see Appendix~\ref{app:crossing-probability}): the crossing event is exactly the
event that $\theta_i$ falls in a window of normalized width $\rho_i = \eta|u_i|/s_i$ at the boundary
the update moves toward, so $p_i$ is the mass the conditional phase law assigns to that window.

In LLM quantization, it is empirically observed that after layerwise or blockwise rescaling, the central mass of pretrained weights is often close to zero-centered normal distributions.  QLoRA \citep{dettmers2023qlora} uses this observation to motivate NormalFloat ($\mathrm{NF4}$), a low-bit datatype designed for normally distributed weights. This observation is also related to mean-field analyses of wide neural-network training, where the empirical measure of the parameter admits a large-width limiting description and cross-particle dependence becomes weak under suitable scaling \citep{sirignano2020meanfield,debortoli2020quantitative}. Corresponding pretrained-weight diagnostics for Qwen-2.5-0.5B are reported in Appendix~\ref{app:pretrained-normal-weight-diagnostics}.

\begin{figure}[t]
\vspace{-1.2em}
\centering
\includegraphics[width=0.9\linewidth]{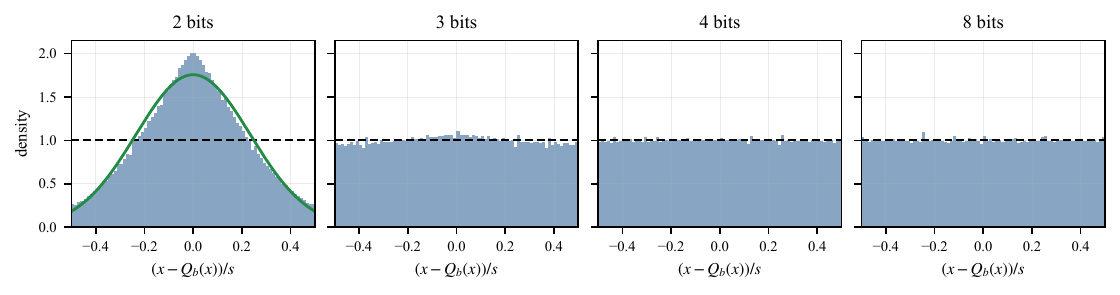}
\caption{Distributions of the normalized residual $\theta_i=(x_i-Q(\mathbf{x})_i)/s_i$ for Qwen-2.5-0.5B at the pretrained point, pooled over non-saturated linear weights.  The solid curve in the 2-bit panel is a fitted zero-centered truncated normal.  The dashed reference is the uniform cell-phase density on $\left[-\frac1 2,\frac1 2\right)$.  The 3-, 4-, and 8-bit cases are close to the uniform residual-phase calculation underlying Model~\ref{model:residual}.}
\label{fig:qwen25-0p5b-residual-homogeneity}
\vspace{-1.2em}
\end{figure}

Based on this observation, we reason about the residual phase induced by folding weights through a finite grid. Consider a scalar pretrained weight $X\sim\mathcal N(0,\tau^2)$ with $\tau>0$, and a symmetric $b$-bit quantizer with grid spacing $s_b>0$. When the weights are measured in units of one quantization step, $X/s_b$ has standard deviation $\lambda_b=\tau/s_b$. Aggregating the normalized residual phase across all non-saturated interior cells gives a folded density
\[
    \pi(\theta)
    \propto
    \sum_{k=-K_b+1}^{K_b-1}
    \exp\!\left(-\frac{(k+\theta)^2}{2\lambda_b^2}\right), \qquad\theta\in\left[-\frac 1 2, \frac 1 2\right).
\]
For this zero-centered folded model, the mass near the upper and lower cell boundaries is the same. We therefore write the one-sided boundary mass as $B(\rho) = \int_{1/2-\rho}^{1/2}\pi(\theta)\dd\theta = \int_{-1/2}^{-1/2+\rho}\pi(\theta)\dd\theta$ for $0\le \rho<1$. For $b=2$, we have $K_2=1$, so the only non-saturated interior cell is the zero cell and the pooled residual phase is center-concentrated. Such a phase places less mass near the cell boundary than the uniform case, so $B(\rho)<\rho$ for small $\rho$. For $b>2$, pooling over many interior cells makes the folded sum nearly flat within one cell, so $B(\rho)\approx\rho$. This is why the 3-, 4-, and 8-bit histograms in Figure~\ref{fig:qwen25-0p5b-residual-homogeneity} are much closer to a uniform cell-phase density. The arguments above start from a normally distributed weight parameter, but the resulting near-uniformity holds for a broad class of weight densities (see Appendix~\ref{app:near-uniform-residual-phases} for more discussion). In Appendix~\ref{app:two-bit-residual-distribution}, we also report the predicted folded densities together with their empirical counterparts.

\begin{wrapfigure}[8]{r}{0.33\linewidth}
\vspace{-1.5em}
\centering
\includegraphics[width=\linewidth]{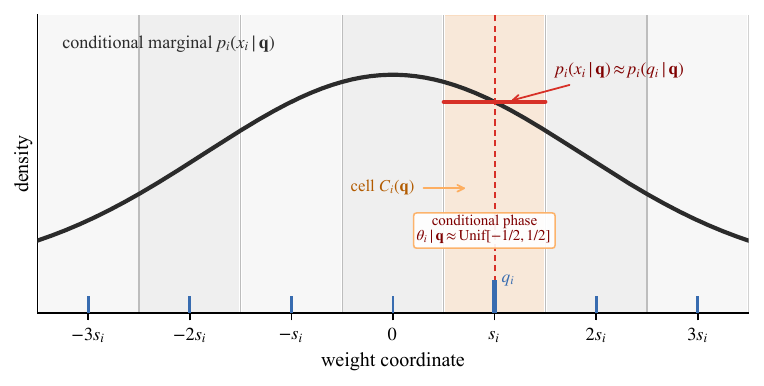}
\captionsetup{font=scriptsize}
\caption{Illustration for residual homogeneity.}
\label{fig:conditional-uniform-cell-illustration}
\vspace{-0.6em}
\end{wrapfigure}

To obtain a tractable description throughout training, we introduce an approximation in which the residual phase remains representative after conditioning on the current endpoint: among non-saturated coordinates, the phase laws do not vary systematically with $\mathbf{q}$. This approximation implies that the update-selected crossing probabilities exhibit the same near-uniform behavior. Figure~\ref{fig:conditional-uniform-cell-illustration} gives the corresponding smooth-density intuition: conditioning on $\mathbf{q}$ restricts each active coordinate $x_i$ to its own quantization cell centered at $q_i$. If the conditional marginal density of $x_i$ varies little across that cell, then the phase $\theta_i=(x_i-q_i)/s_i$ is approximately uniform on $[-\frac12,\frac12)$.  Since the update-selected boundary window has normalized width $\rho_i=\eta |u_i|/s_i$, the uniform phase calculation gives crossing probability $\rho_i$. This gives the simplified conditional model.

\begin{model}[Simplified crossing model]
\label{model:residual}
Suppose Model~\ref{model:residual-general} holds and that, conditional on the current endpoint $\mathbf{q}=Q(\mathbf{x})$, update direction $\mathbf{u}$, and stepsize $\eta$, each non-saturated coordinate has crossing probability
\[
    p_i(\eta,\mathbf{q},\mathbf{u})
    =
    \Pr(J_i=1\mid \eta,\mathbf{q},\mathbf{u})
    =
    \frac{\eta |u_i|}{s_i}.
\]
\end{model}

With the crossing model, we can translate random boundary crossings into an expected one-step bound on the endpoint loss.

\begin{theorem}[Conditional endpoint descent under the crossing model]
\label{thm:active-set-drift}
Suppose Assumptions~\ref{ass:update-direction} and~\ref{ass:smooth} hold, the crossings follow Model~\ref{model:residual-general}, and the stepsize $\eta$ satisfies $\eta U<s_i$ for every coordinate $i$. Given the endpoint $\mathbf{q}=Q(\mathbf{x})$ and update direction $\mathbf{u}=\mathcal{U}(\nabla f(\mathbf{q}))$, write $p_i=p_i(\eta,\mathbf{q},\mathbf{u})$. The endpoint update can be written as
\begin{equation}
\label{eq:stochastic-endpoint-jump}
    \mathbf{q}^+-\mathbf{q}=-\mathbf{s} \odot \sign(\mathbf{u}) \odot \mathbf{J}
\end{equation}
where $J_i\in\{0,1\}$ satisfies $\Pr(J_i=1\mid \eta,\mathbf{q},\mathbf{u})=p_i$. Consequently,
\begin{equation}
\label{eq:general-crossing-endpoint-bound}
    \E[f(\mathbf{q}^+)-f(\mathbf{q})\mid \eta,\mathbf{q},\mathbf{u}]
    \le
    -\sum_i p_i s_i\sign(u_i)g_i
    +
    \frac12\sum_i p_i L_i s_i^2.
\end{equation}
If, in addition, Model~\ref{model:residual} holds and $\mathbf q$ is non-saturated in each coordinate, then
\begin{equation}
\label{eq:coordinate-smooth-endpoint-bound}
    \E[f(\mathbf{q}^+)-f(\mathbf{q})\mid \eta,\mathbf{q},\mathbf{u}]
    \le
    -\eta\ip{\mathbf{g}}{\mathbf{u}}
    +
    \frac{\eta}{2}\sum_iL_i s_i|u_i|.
\end{equation}
\end{theorem}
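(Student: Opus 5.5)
The plan is to reduce everything to Proposition~\ref{prop:realized-crossing-descent} and then take conditional expectations. First I establish the jump representation \eqref{eq:stochastic-endpoint-jump}. Since $\eta U<s_i$ for every $i$ and $\norm{\mathbf{u}}_\infty\le U$ by Assumption~\ref{ass:update-direction}, each shadow coordinate moves by $\eta|u_i|<s_i$. It therefore crosses at most one threshold, so $(\mathbf{q}^+-\mathbf{q})_i\in\{-s_i,0,s_i\}$. By Lemma~\ref{lem:coordinatewise-crossing-sign}, $u_i(q_i^+-q_i)\le 0$, so any nonzero jump equals $-s_i\sign(u_i)$. With $J_i=\one\{q_i^+\ne q_i\}$ this gives \eqref{eq:stochastic-endpoint-jump} pathwise. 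Inactive coordinates ($u_i=0$) never move, and this is consistent because $\sign(0)=0$. Under Model~\ref{model:residual-general}, each $J_i$ is Bernoulli with conditional mean $p_i$.

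Next, for every realization, inequality \eqref{eq:realized-crossing-descent-bound} of Proposition~\ref{prop:realized-crossing-descent} holds. I rewrite its sums over $\{i:J_i=1\}$ as full sums weighted by $J_i$:
\[
f(\mathbf{q}^+)-f(\mathbf{q})\le -\sum_i J_i s_i\sign(u_i)g_i+\frac12\sum_i J_i L_i s_i^2 .
\]
Conditional on $(\eta,\mathbf{q},\mathbf{u})$, the quantities $\mathbf{g}=\nabla f(\mathbf{q})$, $s_i$, $L_i$ and $\sign(u_i)$ are all fixed. So the right-hand side is linear in the $J_i$ with deterministic coefficients. Taking conditional expectations and using linearity, with $\E[J_i\mid\cdot]=p_i$, yields \eqref{eq:general-crossing-endpoint-bound}. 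No independence across coordinates is needed, only the marginal means.

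For \eqref{eq:coordinate-smooth-endpoint-bound}, I substitute $p_i=\eta|u_i|/s_i$ from Model~\ref{model:residual}, which applies to every coordinate because $\mathbf{q}$ is non-saturated. The first term becomes
\[
-\sum_i \eta|u_i|\sign(u_i)g_i=-\eta\sum_i u_ig_i=-\eta\ip{\mathbf{g}}{\mathbf{u}},
\]
using $|u_i|\sign(u_i)=u_i$. The second term becomes $\frac{\eta}{2}\sum_i L_i s_i|u_i|$.

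The main subtlety is the first step. I have to make sure the realized jump direction is exactly $-\sign(u_i)$ and that saturated or boundary cases cause no trouble. I would handle this by citing Lemma~\ref{lem:coordinatewise-crossing-sign} for the direction and the stepsize condition for the at-most-one-cell bound. Clamping at saturated codes can only suppress a jump, which sets $J_i=0$ and keeps the representation valid. I also need to note that $p_i\le 1$ in Model~\ref{model:residual}, which holds because $\eta|u_i|<s_i$.
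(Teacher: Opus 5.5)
Your proposal is correct and follows essentially the same route as the paper. You establish the pathwise representation $q_i^+-q_i=-s_i\sign(u_i)J_i$ from the one-cell stepsize condition and apply the coordinatewise smoothness majorizer, reaching it through Proposition~\ref{prop:realized-crossing-descent} rather than invoking Assumption~\ref{ass:smooth} directly. You then take conditional expectations using only the marginal means $p_i$ and substitute $p_i=\eta|u_i|/s_i$, as the paper does. Your appeal to Lemma~\ref{lem:coordinatewise-crossing-sign} for the jump direction simply justifies a step the paper asserts without comment.
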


The proof is deferred to Appendix~\ref{app:proof-conditional-descent}. Theorem~\ref{thm:active-set-drift} is the one-step certificate for quantized-endpoint descent.
Under the uniform crossing model, the first-order term
$-\eta\ip{\mathbf g}{\mathbf u}$ matches that of continuous optimization.  
However, the second-order term is also linear in $\eta$, rather than
$O(\eta^2)$ as in continuous optimization.  This difference arises because a
smaller $\eta$ reduces the probability of a grid jump but not the size of an
accepted jump.  Therefore, making the stepsize arbitrarily small does not by
itself guarantee descent.  The bound therefore certifies descent when the first-order signal is large enough to dominate the
second-order effect of one-cell jumps.

\section{QAR: A Direct Endpoint Algorithmic Framework}
\label{sec:qar}

\subsection{Signal-Imbalance Induced Oscillation in QAT Dynamics}
\label{sec:signal-imbalance-mechanism}
So far, we have used the crossing model to describe how shadow updates induce transitions between deployed endpoints. The simplified crossing model is reasonably well calibrated at higher bit widths, as shown by Figure~\ref{fig:qwen25-0p5b-adam-qat-crossing-fp32-ratio} in Appendix~\ref{app:qwen25-0p5b-crossing-rate}. However, this pooled view can overlook the temporal dependence in QAT dynamics. In particular, adjacent endpoints can have different update signals, causing the shadow weight to spend unequal amounts of time at them. Consider $f(q)=\frac12(q-q^\star)^2$, adjacent endpoints $q_1<q^\star<q_2$, and the shadow update $x_{t+1}=x_t-\eta g(Q(x_t))$, where $g(q)=q-q^\star$. If $\pi_j$ denotes the residence fraction at $q_j$, the small-step two-cell dynamics approximately balance as
\[
    \pi_1 g(q_1)+\pi_2 g(q_2)\approx 0,
    \qquad
    \frac{\pi_2}{\pi_1}
    \approx \frac{|g(q_1)|}{|g(q_2)|}.
\]
Thus, the weaker the signal at an endpoint, the longer the shadow resides there.  In Figure~\ref{fig:qat-signal-imbalance-1d}, the shadow drifts slowly away from $q_1$, but returns rapidly after crossing to $q_2$.  Such signal-imbalanced oscillations have been observed in QAT \citep{nagel2022overcoming}.

\begin{figure}[H]
\vspace{-0.6em}
\centering
\includegraphics[width=0.65\linewidth]{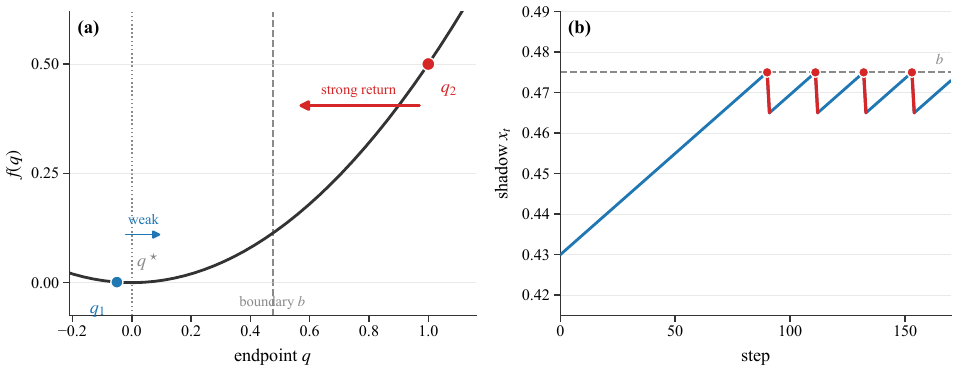}
\caption{Signal-imbalanced QAT dynamics for $f(q)=\frac12q^2$, $q_1=-0.05$, and $q_2=1$.  The weak signal at $q_1$ causes slow drift toward the boundary, whereas the larger signal at $q_2$ quickly pulls the shadow back.}
\label{fig:qat-signal-imbalance-1d}
\vspace{-1.2em}
\end{figure}

Stochastic gradients can further reinforce this asymmetry later in training, when gradient signals become small relative to mini-batch noise. At a weak-signal endpoint such as $q_1$, noise can flip the effective update direction and undo part of the accumulated motion toward the boundary, further lengthening the residence time. Once the shadow crosses to $q_2$, the stronger opposing signal is less easily reversed by noise and quickly drives it back toward $q_1$. This history-dependent cycle biases how long the shadow remains at each endpoint and which crossings persist, even when the pooled crossing rate is well predicted.

This asymmetry can favor the better endpoint, but it is realized indirectly: weak signals move the shadow slowly through a cell, while strong opposite signals can quickly undo a crossing.  This suggests prioritizing endpoint transitions associated with stronger signals.

\subsection{The Proposed Algorithm}
\label{sec:qar-algorithm}
The analysis in Section~\ref{sec:residual-homogeneity} shows how residual
phases convert shadow-weight boundary crossings into probabilistic one-cell
endpoint transitions.  The signal-imbalance mechanism in
Section~\ref{sec:signal-imbalance-mechanism} further suggests assigning more
transition probability to coordinates with stronger signals.  Together,
these two observations motivate a direct realization of quantized-endpoint
dynamics without maintaining a persistent shadow weight.  This leads to
\textbf{QAR} (\textbf{Q}uantization with \textbf{A}mplified
\textbf{R}outing), an algorithmic framework that samples transitions directly
on the quantized codebook. In this framework, a route map selects the update
signal, while an amplifier map converts that signal into coordinatewise jump
intensities. Formally, let
\[
\begin{aligned}
    \mathcal K_i
    =
    \{\underline{k}_i,\underline{k}_i+1,\ldots,\overline{k}_i\}
    \subset\mathbb Z,\quad \mathcal Q=
    \{\mathbf q:\ q_i=s_i k_i,\ k_i\in\mathcal K_i,\ i=1,\ldots,d\},
\end{aligned}
\]
where $\underline{k}_i,\overline{k}_i\in\mathbb Z$ and $\underline{k}_i\le\overline{k}_i$. The symmetric $b$-bit codebook introduced above is the special case $\mathcal K_i=\mathcal K_b$ for every coordinate.

QAR implements these transitions using the integer code $\mathbf k_t$ instead of maintaining a persistent full-precision shadow copy. Under the groupwise quantization used for LLMs, its persistent weight state consists of $\mathbf k_t$ and one shared scale $s_G$ per group, while $\mathbf q_t=\mathbf s\odot\mathbf k_t$ is materialized only for computation. QAR therefore requires less persistent weight state than QAT; detailed memory accounting is provided in Appendix~\ref{app:qwen25-0p5b-memory-diagnostic}.

\begin{algorithm}[t]
\caption{QAR framework: one endpoint step on the finite codebook}
\label{alg:qar}
\begin{algorithmic}[1]
\Require code $\mathbf k_t\in\mathcal K=\mathcal K_1\times\cdots\times\mathcal K_d$, spacings $\mathbf s$, stepsize $\eta$, route map $\mathcal U_t$, amplifier map $\mathcal A$
\State recover endpoint $\mathbf q_t=\mathbf s\odot\mathbf k_t$
\State compute endpoint-gradient estimate $\widehat{\mathbf g}_t$ at $\mathbf q_t$
\State set route $\mathbf u_t=\mathcal U_t(\widehat{\mathbf g}_t)$
\State set direction $\mathbf z_t=\sign(\mathbf u_t)$
\State set feasibility mask $\mathbf M_t=\one_{\mathcal K}(\mathbf k_t-\mathbf z_t)$
\State set probability $\mathbf p_t=\min\{\mathcal A(\eta,\mathbf u_t,\mathbf s,\mathbf M_t),\mathbf 1\}$
\State sample $\mathbf J_t\sim\mathrm{Bernoulli}(\mathbf p_t)$ coordinatewise
\State set $\mathbf k_{t+1}=\mathbf k_t-\mathbf z_t\odot\mathbf J_t$
\Ensure updated code $\mathbf k_{t+1}$
\end{algorithmic}
\end{algorithm}

For a candidate code vector $\mathbf v$, define the coordinatewise feasibility indicator $\one_{\mathcal K}(\mathbf v)_i = \one\{v_i\in\mathcal K_i\}$. Algorithm~\ref{alg:qar} uses an amplifier map $\mathcal A: \mathbb R_+\times\mathbb R^d\times\mathbb R_{++}^d \times\{0,1\}^d \to\mathbb R_+^d$ to convert the gradient magnitude into jump intensities. Because the movement direction is represented separately by $\mathbf z_t=\sign(\mathbf u_t)$, the amplifier returns only nonnegative intensities and assigns zero intensity to infeasible coordinates through $\mathbf M_t$. This masking guarantees that $\mathbf k_{t+1}\in\mathcal K$ whenever $\mathbf k_t\in\mathcal K$, so every QAR iterate is well-defined. Both the minimum and the Bernoulli sampling in Algorithm~\ref{alg:qar} are applied coordinatewise.

A natural baseline is obtained by reproducing the simplified crossing law in Model~\ref{model:residual}. Applying the feasibility mask therefore gives the uniform amplifier
\begin{equation}
\label{eq:uniform-amplifier}
    \mathcal A_{\rm unif}(\eta,\mathbf u,\mathbf s,\mathbf M)
    =
    \eta\mathbf M\odot|\mathbf u|\oslash\mathbf s,
\end{equation}
where $\oslash$ denotes elementwise division. With this choice, Algorithm~\ref{alg:qar} realizes the boundary-projected uniform crossing model: the amplifier assigns zero intensity to outward jumps, while the cap ensures $\mathbf p_t\in[0,1]^d$.  Away from the boundary, it recovers the transition law in Model~\ref{model:residual} whenever $\eta|u_{t,i}|\le s_i$. With $\mathcal A=\mathcal A_{\rm unif}$, the SGD route $\mathbf u_t=\widehat{\mathbf g}_t$, and the one-cell condition $\eta|\widehat g_{t,i}|\le s_i$, QAR recovers the DQT stochastic-rounding update~\citep{zhao2025directquantizedtraining} on a fixed uniform codebook: coordinate $i$ moves one level in direction $-\sign(\widehat g_{t,i})$ with probability $\eta|\widehat g_{t,i}|/s_i$, with matching clipping at codebook boundaries. The detailed comparison with DQT and other direct low-precision methods can be found in Appendix~\ref{app:related-work}.

The QAR framework specifies how to sample endpoint transitions, but it leaves open how the routing intensity should be distributed within each scale-sharing group. The signal-imbalance analysis suggests assigning larger amplification scores to coordinates with stronger feasible gradient magnitudes. For each scale-sharing group $G$ and every $i\in G$, let $s_G=s_i$ denote the common scale within this group and $a_i=M_i|u_i|$ be the feasible gradient magnitude. Define the normalized power score for $\gamma\ge0$ by
\[
    b_{\gamma,i}
    =\begin{cases}
    \displaystyle\frac{a_i^\gamma}
    {|G|^{-1}\sum_{j\in G}a_j^\gamma},
       & \gamma>0\ \text{and }\sum_{j\in G}a_j^\gamma>0,\\[4pt]
    1, & \text{else},
    \end{cases}
    \qquad i\in G,
\]
and define the power amplifier
\begin{equation}
\label{eq:power-amplifier}
    \mathcal A_{\gamma,i}(\eta,\mathbf u,\mathbf s,\mathbf M)
    =\frac{\eta M_i|u_i|}{s_G}\,b_{\gamma,i},
    \qquad i\in G.
\end{equation}
For any $\gamma\ge0$, we call Algorithm~\ref{alg:qar} instantiated with $\mathcal A_\gamma$ by \emph{QAR-$\gamma$}. At $\gamma=0$, it reduces to the uniform-routing update in \eqref{eq:uniform-amplifier} {by treating coordinates within each scale-sharing group equally; for $\gamma>0$, we amplify the crossing probability for coordinates with larger gradient magnitude $a_i=M_i|u_i|$.}

\begin{figure}[H]
    \centering
    \includegraphics[width=0.92\linewidth]{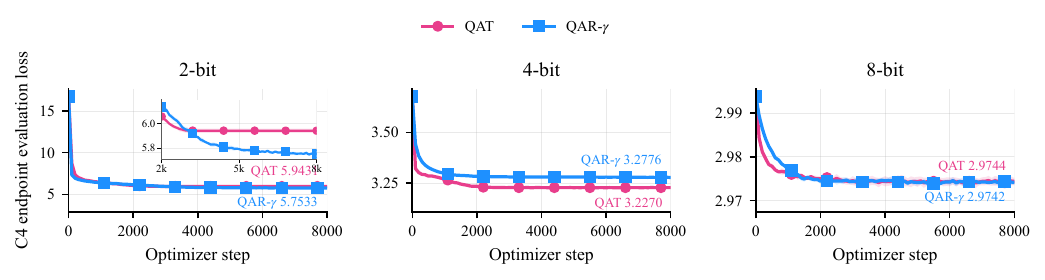}
\caption{Qwen-2.5-0.5B evaluation-loss curves for the best settings of QAT and QAR-$\gamma$ at 2, 4, and 8 bits. Curves show the mean over three seeds, and shading shows $\pm1$ sample standard deviation.}
    \label{fig:qwen25-0p5b-main-qat-qar-best-eval-loss-q}
\end{figure}

\paragraph{Experimental Results.} Figure~\ref{fig:qwen25-0p5b-main-qat-qar-best-eval-loss-q} shows that QAR-$\gamma$ helps most at 2 bits on Qwen-2.5-0.5B. At higher bits, QAR remains competitive with QAT. The selected amplification is strongest at 2 bits, weaker at 4 bits, and absent at 8 bits. This suggests that favoring stronger update signals is useful when grid jumps are coarse, but offers little benefit on finer grids. The Llama-3.2-3B results in Table~\ref{tab:llama3b-qat-qar-8task} show the same trend at a larger scale: QAR-$\gamma$ performs better at 2 bits on both language-modeling and average downstream metrics, while the differences are small at 4 and 8 bits. Table~\ref{tab:memory-measured} further shows that the memory advantage grows as the code width decreases. QAR therefore provides its clearest quality and memory benefits under aggressive quantization.
Detailed experimental settings and ablations are deferred to Appendix~\ref{app:additional-empirical-diagnostics}.

\begin{table}[t]
\centering
\caption{Llama-3.2-3B results after 10,000 training steps. QAR-$\gamma$ uses $\gamma=(8,1,0)$ at $(2,4,8)$ bits.}
\label{tab:llama3b-qat-qar-8task}
\begingroup
\scriptsize
\setlength{\tabcolsep}{2.4pt}
\resizebox{\textwidth}{!}{%
\begin{tabular}{clrrrrrrrrrrr}
\toprule
Bits & Method & C4 PPL $\downarrow$ & Wiki PPL $\downarrow$ & HellaSwag & PIQA & ARC-E & ARC-C & Wino. & BoolQ & OBQA & LAMBADA & Avg. $\uparrow$ \\
\midrule
2 & QAT & 358.89 & 779.54 & 25.32 & 52.77 & 26.77 & 24.83 & 50.36 & 38.17 & 26.00 & 0.00 & 30.53 \\
2 & QAR-$\gamma$ & \textbf{201.90} & \textbf{393.14} & 25.56 & 52.12 & 28.66 & 23.81 & 49.33 & 49.69 & 25.20 & 0.08 & \textbf{31.81} \\
\midrule
4 & QAT & \textbf{14.15} & \textbf{12.13} & 70.22 & 77.15 & 68.86 & 42.32 & 64.40 & 72.72 & 37.60 & 64.22 & \textbf{62.19} \\
4 & QAR-$\gamma$ & \textbf{14.41} & \textbf{12.32} & 69.82 & 76.44 & 68.60 & 40.27 & 67.09 & 71.74 & 38.20 & 64.43 & \textbf{62.07} \\
\midrule
8 & QAT & \textbf{11.09} & \textbf{10.12} & 73.83 & 77.69 & 72.35 & 46.25 & 70.09 & 73.43 & 42.40 & 69.47 & \textbf{65.69} \\
8 & QAR-$\gamma$ & \textbf{11.06} & \textbf{10.12} & 73.72 & 77.97 & 71.89 & 46.42 & 69.85 & 73.36 & 42.80 & 69.77 & \textbf{65.72} \\
\bottomrule
\end{tabular}%
}
\endgroup
\end{table}

\begin{center}
\captionof{table}{Peak CUDA memory cost (MiB) for QAT and QAR-$\gamma$. Parentheses show reductions relative to QAT.}
\label{tab:memory-measured}
\footnotesize
\setlength{\tabcolsep}{4pt}
\renewcommand{\arraystretch}{1.05}
\begin{tabular}{lrrrr}
\toprule
& QAT & \multicolumn{3}{c}{QAR-$\gamma$} \\
\cmidrule(lr){3-5}
& & 8-bit & 4-bit & 2-bit \\
\midrule
Qwen-2.5-0.5B & $3{,}238$ & $2{,}904$ ($10.3\%$) & $2{,}733$ ($15.6\%$) & $\mathbf{2{,}648}$ ($\mathbf{18.2\%}$) \\
Llama-3.2-3B & $17{,}743$ & $15{,}100$ ($14.9\%$) & $13{,}756$ ($22.5\%$) & $\mathbf{13{,}084}$ ($\mathbf{26.3\%}$) \\
\bottomrule
\end{tabular}
\end{center}

\subsection{Finite-Grid Feasible-Gradient Bounds for QAR}
\label{sec:finite-grid-feasible-gradient-bounds}

At a codebook boundary, the full gradient need not vanish because one descent direction can be infeasible. For $q_i=s_ik_i$, define the feasible endpoint gradient by
\begin{equation}
\label{eq:feasible-endpoint-gradient}
    \big(\mathbf g_{\mathcal Q}(\mathbf q)\big)_i
    =
    g_i\,\one\{k_i-\sign(g_i)\in\mathcal K_i\}.
\end{equation}
It retains exactly the components with a feasible one-cell descent move and equals the full gradient away from the boundary.

\begin{theorem}[Feasible-gradient bound for QAR-$\gamma$]
\label{thm:qar-uniform-power-nonconvex}
Suppose Assumptions~\ref{ass:update-direction} and~\ref{ass:smooth} hold. Write $f^*_{\mathcal Q}=\min_{\mathbf q\in\mathcal Q}f(\mathbf q)$.  For each scale-sharing group $G$ with $s_i=s_G$, let $L_G=\max_{i\in G}L_i$. For any $\gamma\ge0$, run QAR-$\gamma$ according to Algorithm~\ref{alg:qar}, and let $b_{t,i}$ denote the power score of coordinate $i$ at step $t$. Assume almost surely that $\eta U b_{t,i}\le s_G$ for every $t<T$ and $i\in G$.

If $\mathbf u_t=\nabla f(\mathbf q_t)$ (GD), then
\begin{equation}
\label{eq:qar-uniform-power-cumulative-gradient-bound}
\begin{aligned}
    \frac1T\sum_{t=0}^{T-1}
    \E\!\left[\|\mathbf g_{\mathcal Q}(\mathbf q_t)\|_2^2\right]\le \frac1T\sum_{t=0}^{T-1}\E\!\left[
      \sum_G\sum_{i\in G}b_{t,i}
      \big(\mathbf g_{\mathcal Q}(\mathbf q_t)\big)_i^2\right]\le \frac{2(f(\mathbf q_0)-f^*_{\mathcal Q})}{\eta T}
    +\frac14\sum_G|G|L_G^2s_G^2.
\end{aligned}
\end{equation}
If $\mathbf u_t=\sign(\nabla f(\mathbf q_t))$ (signGD), then
\begin{equation}
\label{eq:qar-uniform-power-sign-cumulative-gradient-bound}
    \frac1T\sum_{t=0}^{T-1}
    \E\!\left[\|\mathbf g_{\mathcal Q}(\mathbf q_t)\|_1\right]
    \le
    \frac{f(\mathbf q_0)-f^*_{\mathcal Q}}{\eta T}
    +\frac12\sum_G|G|L_Gs_G.
\end{equation}
\end{theorem}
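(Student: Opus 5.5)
The plan is a one-step expected descent inequality in the style of Theorem~\ref{thm:active-set-drift}, followed by telescoping. Condition on $\mathbf q_t$; in the GD and signGD settings the route $\mathbf u_t$ is then deterministic. Because $\eta U b_{t,i}\le s_G$, the cap $\min\{\cdot,1\}$ in Algorithm~\ref{alg:qar} is inactive. Hence $p_{t,i}=\eta M_{t,i}|u_{t,i}|b_{t,i}/s_G$. The update $\mathbf q_{t+1}-\mathbf q_t=-\mathbf s\odot\mathbf z_t\odot\mathbf J_t$ has the same structure as \eqref{eq:stochastic-endpoint-jump}: each coordinate makes at most one feasible one-cell jump opposite to $\sign(u_{t,i})$. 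Assumption~\ref{ass:smooth} then applies exactly as in the proof of \eqref{eq:general-crossing-endpoint-bound}, using $J_i^2=J_i$ and $\E[J_i]=p_{t,i}$. This gives
\[
\E[f(\mathbf q_{t+1})-f(\mathbf q_t)\mid \mathbf q_t]\le-\sum_G\sum_{i\in G}p_{t,i}s_G\sign(u_{t,i})g_{t,i}+\frac12\sum_G\sum_{i\in G}p_{t,i}L_is_G^2 .
\]

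Next I substitute $p_{t,i}$. The mask satisfies $M_{t,i}=\one\{k_{t,i}-\sign(g_{t,i})\in\mathcal K_i\}$ in both routes, since $\sign(u_{t,i})=\sign(g_{t,i})$. Hence $M_{t,i}|u_{t,i}|\sign(u_{t,i})g_{t,i}$ equals $(\mathbf g_{\mathcal Q})_i^2$ for GD and $|(\mathbf g_{\mathcal Q})_i|$ for signGD.

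\emph{GD case.} The first-order term becomes $-\eta\sum b_{t,i}(\mathbf g_{\mathcal Q})_i^2$. The penalty is $\frac{\eta}{2}\sum_i b_{t,i}L_is_G|(\mathbf g_{\mathcal Q})_i|$. Young's inequality gives
\[
L_is_G|(\mathbf g_{\mathcal Q})_i|\le (\mathbf g_{\mathcal Q})_i^2+\tfrac14L_G^2s_G^2 .
\]
So the drift is at most $-\frac\eta2\sum b_{t,i}(\mathbf g_{\mathcal Q})_i^2+\frac\eta8\sum_G L_G^2s_G^2\sum_{i\in G}b_{t,i}$. The normalization of the power score gives $\sum_{i\in G}b_{t,i}=|G|$ in both branches of its definition. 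The penalty is therefore $\frac\eta8\sum_G|G|L_G^2s_G^2$.

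I then telescope over $t<T$, use $f(\mathbf q_T)\ge f^*_{\mathcal Q}$ (every iterate is feasible), and divide by $\eta T/2$. This yields the right inequality in \eqref{eq:qar-uniform-power-cumulative-gradient-bound} with constant $\frac14\sum_G|G|L_G^2s_G^2$.

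\emph{Left inequality for GD.} I need $\sum_{i\in G}(\mathbf g_{\mathcal Q})_i^2\le\sum_{i\in G}b_{t,i}(\mathbf g_{\mathcal Q})_i^2$. With $a_i=|(\mathbf g_{\mathcal Q})_i|$, this reads
\[
\frac{1}{|G|}\sum a_j^\gamma\cdot\sum a_i^2\le\sum a_i^{2+\gamma}.
\]
This is Chebyshev's sum inequality, since $a^\gamma$ and $a^2$ are similarly ordered. The degenerate branch $b\equiv1$ is trivial.

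\emph{signGD case.} Here $p_{t,i}=\eta M_{t,i}b_{t,i}/s_G$. The first-order term is $-\eta\sum b_{t,i}|(\mathbf g_{\mathcal Q})_i|$. The penalty is at most $\frac\eta2\sum_G L_Gs_G\sum_{i\in G}b_{t,i}M_{t,i}\le\frac\eta2\sum_G|G|L_Gs_G$.

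One subtlety is that when $\gamma>0$ we have $a_i=M_i$, so $b_{t,i}=|G|M_i/\sum_j M_j$. The inequality $\sum_{i\in G}|(\mathbf g_{\mathcal Q})_i|\le\sum_{i\in G}b_{t,i}|(\mathbf g_{\mathcal Q})_i|$ then holds because $b_{t,i}\ge1$ wherever $M_i=1$, and $(\mathbf g_{\mathcal Q})_i=0$ wherever $M_i=0$. Telescoping and dividing by $\eta T$ gives \eqref{eq:qar-uniform-power-sign-cumulative-gradient-bound}.

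The main obstacles are two bookkeeping points, not the descent itself. First, the penalty must be bounded uniformly by $|G|$ through the normalization $\sum_{i\in G}b_{t,i}=|G|$, rather than growing with the amplification. Second, the weighted quantity must dominate the unweighted one, which is where the Chebyshev (rearrangement) step is essential.
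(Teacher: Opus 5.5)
Your proposal is correct and follows essentially the paper's proof: the same uncapped one-step majorization, the scalar Young bound combined with $\sum_{i\in G}b_{t,i}=|G|$, Chebyshev's sum inequality for the weighted-versus-unweighted comparison, the $b_{t,i}\ge1$ argument for signGD, and telescoping with $f(\mathbf q_T)\ge f^*_{\mathcal Q}$. One small slip: for signGD with $\gamma>0$, a coordinate with $g_{t,i}=0$ has $M_{t,i}=1$ but $a_{t,i}=b_{t,i}=0$, so $a_i=M_i$ fails there. The correct statement is that $b_{t,i}\ge1$ wherever $(\mathbf g_{\mathcal Q}(\mathbf q_t))_i\neq0$, and that is all your inequality needs.
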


\begin{remark}[Effect of signal imbalance]
\label{rem:power-weighted-certificate}
The first inequality in \eqref{eq:qar-uniform-power-cumulative-gradient-bound} follows from Lemma~\ref{lem:power-score}. It is an equality for $\gamma=0$ and is strict for $\gamma>0$ whenever the feasible-gradient magnitudes are unequal within at least one group. This shows how leveraging signal imbalance can accelerate convergence relative to uniform routing.
\end{remark}

For comparison, let $\mathbf y_{t+1}=\mathbf y_t-\eta\nabla f(\mathbf y_t)$. For an $L$-smooth objective, classical nonconvex GD with fixed $0<\eta\le1/L$ satisfies
\[
\frac1T\sum_{t=0}^{T-1}\|\nabla f(\mathbf y_t)\|_2^2
\le \frac{2(f(\mathbf y_0)-\inf_y f(y))}{\eta T}
=O(T^{-1}).
\]
Equation~\eqref{eq:qar-uniform-power-cumulative-gradient-bound} replaces the full gradient by the feasible endpoint gradient and adds a grid floor, while $\eta U b_{t,i}\le s_G$ ensures that the transition probabilities are not truncated. For gradient update, the bound of QAR decreases like $T^{-1}$ until it reaches the fixed-grid floor $\frac14\sum_G|G|L_G^2s_G^2$. The sign update instead controls the average feasible $\ell_1$ gradient by $O(T^{-1})$ plus its grid floor. Both finite-grid floors are unavoidable under the stated assumptions. The proof and a construction showing that these finite-grid floors are tight appear in Appendix~\ref{app:power-amplifier-theory}. 
The stochastic extension is deferred to Appendix~\ref{app:stochastic-qar}, where we further characterize the tradeoff between exploiting signal imbalance and amplifying stochastic-gradient noise. 

\section{Conclusion and Limitations}
QAT updates a full-precision shadow weight but deploys a quantized endpoint, and the relevant object of analysis is therefore the deployed endpoint trajectory. We show how homogeneous residual model turns endpoint motion into a probabilistic transition on the finite grid and how signal imbalance can create asymmetric oscillations, motivating QAR's update on the quantization code without keeping a full-precision shadow weight. We derive feasible-gradient bounds for QAR with power amplifier and discuss the tradeoff between the gain from signal imbalance and the penalty of strengthening stochastic noise (see Appendix~\ref{app:stochastic-qar}). Experiments support the endpoint view, especially in low-bit LLM post-training, and show that QAR is comparable to or better than QAT at lower peak memory.

\noindent\textbf{Limitations.} This work focuses on weight-only quantization. In this setting, the deployed model lives on a finite weight grid. Our analysis tracks how STE updates move the shadow weights across the cells of this grid. Practical low-bit QAT often quantizes activations as well, where the quantized values depend on the input data and the endpoint can change through both cell crossings and activation-level changes.  Extending the finite-grid endpoint view to joint weight-activation QAT is an important direction for future work.

\newpage
\bibliographystyle{iclr2027_arxiv}
\bibliography{references}

\newpage
\appendix
\raggedbottom
\makeatletter
\renewcommand{\@seccntformat}[1]{%
  \ifnum\pdfstrcmp{#1}{section}=0
    Appendix~\csname the#1\endcsname\quad
  \else
    \csname the#1\endcsname\quad
  \fi
}
\@addtoreset{table}{section}
\makeatother
\setcounter{table}{0}
\renewcommand{\thetable}{\Alph{section}.\arabic{table}}

\section{Related Work}
\label{app:related-work}

\paragraph{Direct low-precision weight updates.} Direct updates of discrete weights predate recent low-bit language-model training.  Discrete State Transition (DST) uses probabilistic projection to keep weights in a configurable multilevel discrete space \citep{li2018dst}. In the one-cell regime, DST's nonlinear projection can be represented within QAR by a tanh amplifier.  Bop instead stores binary weights and an exponential moving average of their gradients, then deterministically flips a weight when the accumulated signal exceeds a threshold and points in the flipping direction \citep{helwegen2019latent}. Direct Quantized Training (DQT) applies stochastic rounding to a tentative optimizer update on a low-bit language-model weight grid \citep{zhao2025directquantizedtraining}. On a fixed uniform codebook, QAR-$0$ with the SGD route and $\eta|u_{t,i}|\le s_i$ recovers its coordinatewise transition law and boundary clipping.  ECO removes master weights through an error-feedback loop in optimizer momentum \citep{nikdan2026eco}.  QAR approaches direct discrete updates from a different starting point: it derives a transition law from QAT's boundary-crossing dynamics and factors that law into a route and an amplifier.  This formulation supports signal-dependent power amplification and a finite-codebook analysis in terms of feasible endpoint gradients.

\paragraph{Transition-rate scheduling.} One way to regulate endpoint motion in QAT is to control how often shadow weights cross quantization boundaries. \citet{lee2025scheduling} adapt the shadow-weight learning rate to track a prescribed fraction of quantized weights that change levels. QAR instead samples coordinatewise endpoint transitions directly.

\paragraph{Update scaling.} Boundary crossings can also be influenced by changing the magnitude or frequency of shadow-weight updates. \citet{lee2021ewgs} use the gradient sign and quantization residual to rescale each STE update, with a Hessian-based rule for adapting the scaling factor. \citet{joo2026maskingupdates} randomly mask blockwise optimizer updates while retaining dense moment updates and use momentum--gradient alignment to control the masking probability. These approaches modify shadow-weight updates, whereas QAR leverages signal imbalance to assign larger transition probabilities to feasible one-cell endpoint updates with stronger gradient signals.

\section{Experimental Results}
\label{app:additional-empirical-diagnostics}

\begin{table}[!htbp]
\caption{Experimental settings for the Qwen-2.5-0.5B and Llama-3.2-3B runs}
\label{tab:llm-experimental-settings}
\label{tab:qwen25-0p5b-qat-qar-setting}
\label{tab:llama32-3b-experimental-setting}
\centering
\scriptsize
\setlength{\tabcolsep}{3pt}
\begin{tabular}{p{0.18\linewidth}p{0.36\linewidth}p{0.36\linewidth}}
\toprule
setting & Qwen-2.5-0.5B & Llama-3.2-3B \\
\midrule
compute resource & 1 NVIDIA A40 GPU per run & 1 NVIDIA A100 GPU per run \\
training budget & $8000$ steps & $10{,}000$ steps \\
sequence length & $512$ & $512$ \\
quantizer & \multicolumn{2}{p{0.72\linewidth}}{fixed symmetric per-output-channel uniform weight quantization, initialized by RTN; language-model head excluded} \\
bit widths & $2,4,8$ & $2,4,8$ \\
scale selection & \multicolumn{2}{p{0.72\linewidth}}{$s_c=\max_j |W_{cj}|/(2^{b-1}-1)$, fixed during training} \\
training scope & \multicolumn{2}{p{0.72\linewidth}}{quantized linear weights only; biases, embeddings, normalizations, and all other parameters frozen} \\
training data & \multicolumn{2}{p{0.72\linewidth}}{tokens from SlimPajama-627B~\citep{cerebras2023slimpajama}} \\
batch size & $1$ & $4$ \\
seeds & $0,1,2$ & $0$ \\
evaluation data & C4 validation~\citep{raffel2020exploring} & C4 and WikiText-2~\citep{merity2017pointer} perplexity; eight zero-shot tasks \\
optimizer & \multicolumn{2}{p{0.72\linewidth}}{AdamW with QAT $(\beta_1,\beta_2)=(0.9,0.95)$ and QAR-$\gamma$ $(\beta_1,\beta_2)=(0.95,0.9)$, $\epsilon=10^{-8}$} \\
weight decay & \multicolumn{2}{p{0.72\linewidth}}{$0$} \\
gradient clipping & \multicolumn{2}{p{0.72\linewidth}}{$0$ (disabled)} \\
learning-rate schedule & \multicolumn{2}{p{0.72\linewidth}}{no warmup; cosine decay to $0.1\eta_0$ over the first half of the training budget, then held fixed} \\
\bottomrule
\end{tabular}
\end{table}

\subsection{Experimental Settings}
\label{app:qwen25-0p5b-exp}

We use Qwen-2.5-0.5B and Llama-3.2-3B to evaluate the finite-grid predictions on post-training QAT runs and compare QAT with QAR-$\gamma$. For each model, both methods start from the same pretrained checkpoint and run at bit widths $b\in\{2,4,8\}$.  We quantize linear-layer weights except the language-model head using fixed symmetric per-output-channel uniform quantization initialized by round-to-nearest (RTN).  For each output channel $c$, we set $s_c=\max_j |W_{cj}|/q_{\max}$, where $W_{cj}$ is the weight in output channel $c$ and input coordinate $j$, and $q_{\max}=2^{b-1}-1$, then round and clip the integer levels to $[-q_{\max},q_{\max}]$.  The scales remain fixed for both methods. We use the same quantizer family at every bit width so that bit width is the only factor that varies across the comparisons. At 2 bits this is coarser than the sub-channel group sizes typically used for 2-bit deployment, and the absolute quality of the 2-bit endpoints is correspondingly low (Table~\ref{tab:llama3b-qat-qar-8task}). The 2-bit rows should therefore be read as a comparison of endpoint optimization on the coarsest grid rather than as a 2-bit deployment result.

We train both models on SlimPajama-627B~\citep{cerebras2023slimpajama} and use C4 validation~\citep{raffel2020exploring} to evaluate the endpoint. For Llama-3.2-3B, we additionally report WikiText-2 perplexity~\citep{merity2017pointer} and eight zero-shot tasks. All runs use sequence length $512$ and gradient accumulation $1$. The Qwen runs use minibatch size $1$ for $8000$ steps on an NVIDIA A40, whereas the Llama runs use minibatch size $4$ for $10{,}000$ steps on an NVIDIA A100.

All runs use AdamW with QAT $(\beta_1,\beta_2)=(0.9,0.95)$ and QAR-$\gamma$ $(\beta_1,\beta_2)=(0.95,0.9)$, $\epsilon=10^{-8}$, zero weight decay, and no gradient clipping. The learning rate decays without warmup to $0.1\eta_0$ over the first half of the training budget and remains fixed thereafter.

\begin{table}[!htbp]
\caption{Final C4 endpoint evaluation loss at each method's selected best learning rate. Every entry is the mean $\pm$ sample standard deviation over three seeds (lower is better).}
\label{tab:qwen25-0p5b-matched-final-loss}
\centering
\small
\setlength{\tabcolsep}{4pt}
\begin{tabular}{cccc}
\toprule
Bits & QAT & QAR-$\gamma$ & $\gamma$ \\
\midrule
2 & $5.9431\pm0.0175$ & $\mathbf{5.7533\pm0.0120}$ & 16 \\
4 & $\mathbf{3.2270\pm0.0022}$ & $3.2776\pm0.0037$ & 2 \\
8 & $2.9744\pm0.0008$ & $2.9742\pm0.0007$ & 0 \\
\bottomrule
\end{tabular}
\end{table}

Table~\ref{tab:qwen25-0p5b-matched-final-loss} reports the final-step values at the selected matched cosine-schedule settings.

\subsection{Separate Endpoint Diagnostics}

Figure~\ref{fig:qwen25-0p5b-app-qat-shadow-endpoint-loss} directly compares QAT's shadow and deployed-endpoint losses at the selected best learning rate for each bit width. At 2 and 4 bits, the C4 evaluation loss decreases, whereas the shadow loss increases substantially.

\begin{figure}[H]
\centering
\includegraphics[width=\linewidth]{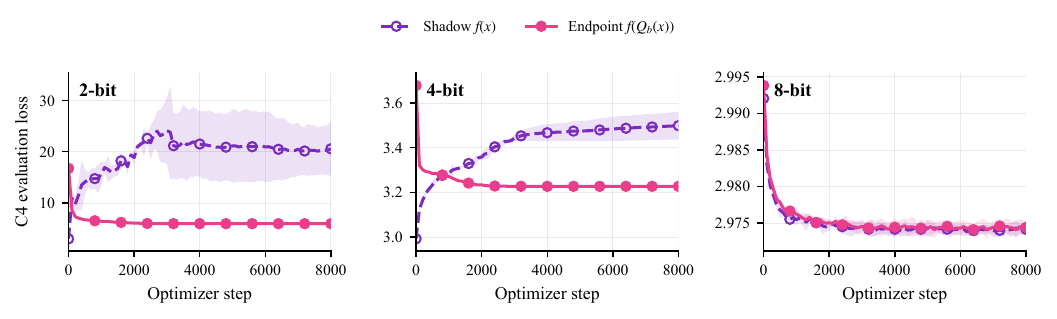}
\caption{Qwen-2.5-0.5B QAT shadow loss $f(x)$ versus deployed-endpoint loss $f(Q_b(x))$. We plot the mean value across 3 seeds, and the shading is $\pm1$ sample standard deviation over three seeds at every bit width.}
\label{fig:qwen25-0p5b-app-qat-shadow-endpoint-loss}
\end{figure}

Figure~\ref{fig:qwen25-0p5b-app-loss-landscape-step-grid} compares the initial and final QAT checkpoints. At initialization, the shadow point has lower loss than the quantized endpoint, especially at 2 bits. After QAT, this ordering is reversed: the endpoint loss falls from $16.75$ to $5.92$ at 2 bits and from $3.68$ to $3.23$ at 4 bits, while the corresponding shadow loss rises from $2.99$ to $26.68$ and $3.47$, respectively. Thus QAT can improve the deployed endpoint even as its shadow moves into a higher-loss region, with the separation becoming most pronounced under aggressive quantization.

\begin{figure}[H]
\centering
\includegraphics[width=\linewidth]{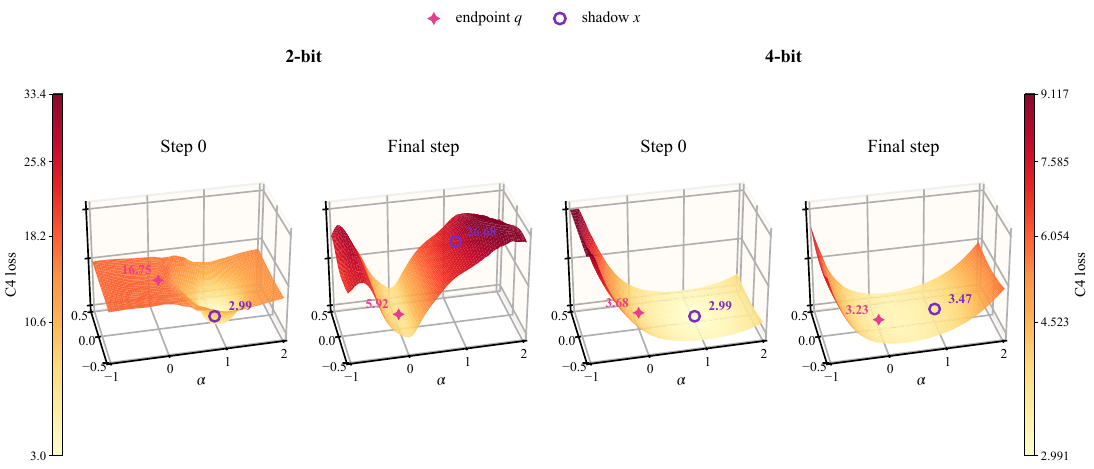}
\caption{Qwen-2.5-0.5B C4 evaluation-loss landscape at step 0 and at final step for seed 0. The surface evaluates $Q_b(\mathbf{x})+\alpha(\mathbf{x}-Q_b(\mathbf{x}))+\beta \norm{\mathbf{x}-Q_b(\mathbf{x})}_2\mathbf v$ for a random unit $\mathbf v\perp(\mathbf{x}-Q_b(\mathbf{x}))$.}
\label{fig:qwen25-0p5b-app-loss-landscape-step-grid}
\end{figure}

\subsection{Gradient Diagnostics}

Figure~\ref{fig:qwen25-0p5b-app-qat-qar-active-l1} tracks the average magnitude of active gradient coordinates, $\|g\|_1/\|\operatorname{sign}(g)\|_1$. At the quantized endpoint, both QAT and QAR-$\gamma$ reduce this quantity, most clearly at 2 bits, showing that both methods remove strong endpoint-gradient signals. In contrast, QAT's shadow-space value can remain much larger at low precision, reinforcing that the shadow trajectory is not a faithful diagnostic of endpoint progress.

\begin{figure}[H]
\centering
\includegraphics[width=\linewidth]{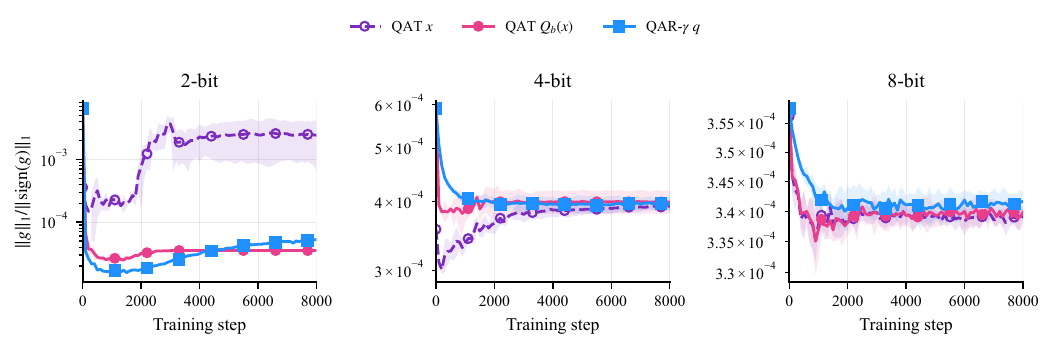}
\caption{Qwen-2.5-0.5B active-gradient ratio at 2, 4, and 8 bits: QAT at $x$, QAT at $Q_b(x)$, and QAR-$\gamma$ at $q$, with $\gamma=(16,2,0)$, respectively. We plot the mean value across 3 seeds, and the shading is $\pm1$ sample standard deviation over three seeds at every bit width.}
\label{fig:qwen25-0p5b-app-qat-qar-active-l1}
\end{figure}

\subsection{Crossing-Rate Diagnostics}
\label{app:qwen25-0p5b-crossing-rate}

For the realized shadow step from $\mathbf{x}_t$ to $\mathbf{x}_{t+1}$, let $\mathcal I_t$ contain the active non-saturated coordinates and define the predicted and observed aggregate crossing rates by
\[
\widehat p_t=\frac{1}{|\mathcal I_t|}\sum_{i\in\mathcal I_t}\min\!\left\{\frac{|(\mathbf{x}_{t+1})_i-(\mathbf{x}_t)_i|}{s_i},1\right\},
\qquad
\widehat c_t=\frac{1}{|\mathcal I_t|}\sum_{i\in\mathcal I_t}\one\!\left\{Q_b(\mathbf{x}_{t+1})_i\ne Q_b(\mathbf{x}_t)_i\right\}.
\]

\begin{figure}[H]
\centering
\includegraphics[width=0.9\linewidth]{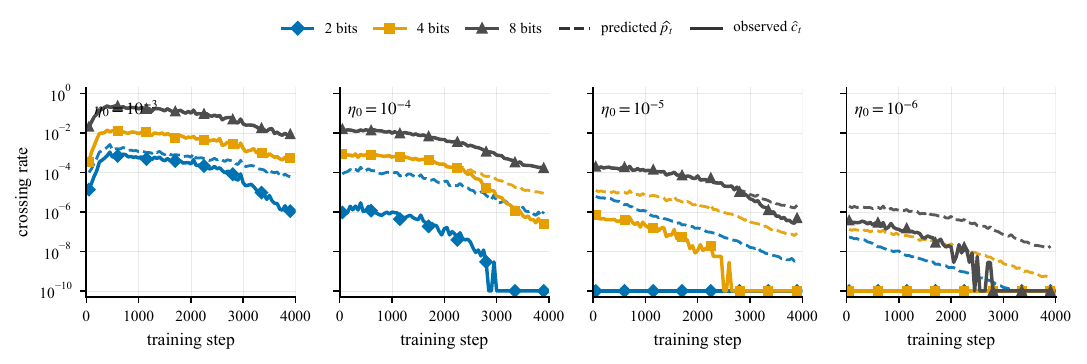}
\caption{QAT crossing rates on Qwen-2.5-0.5B across four learning rates and 2, 4, and 8 bits. Dashed curves show the prediction $\widehat p_t$ and solid curves show the observed rate $\widehat c_t$. Zero observations are displayed at $10^{-10}$.}
\label{fig:qwen25-0p5b-adam-qat-crossing-rate}
\end{figure}

Figures~\ref{fig:qwen25-0p5b-adam-qat-crossing-rate} and~\ref{fig:qwen25-0p5b-selected-qat-qar-crossing-rate} compare predicted or planned crossing rates with realized fixed-grid crossings. Figure~\ref{fig:qwen25-0p5b-adam-qat-crossing-rate} sweeps $\eta_0\in\{10^{-3},10^{-4},10^{-5},10^{-6}\}$ for QAT, while Figure~\ref{fig:qwen25-0p5b-selected-qat-qar-crossing-rate} shows the best QAT and QAR-$\gamma$ settings at 2, 4, and 8 bits. Once the updates become sufficiently small, the realized QAT rate can collapse even though its prediction remains nonzero.

\begin{figure}[H]
\centering
\includegraphics[width=0.86\linewidth]{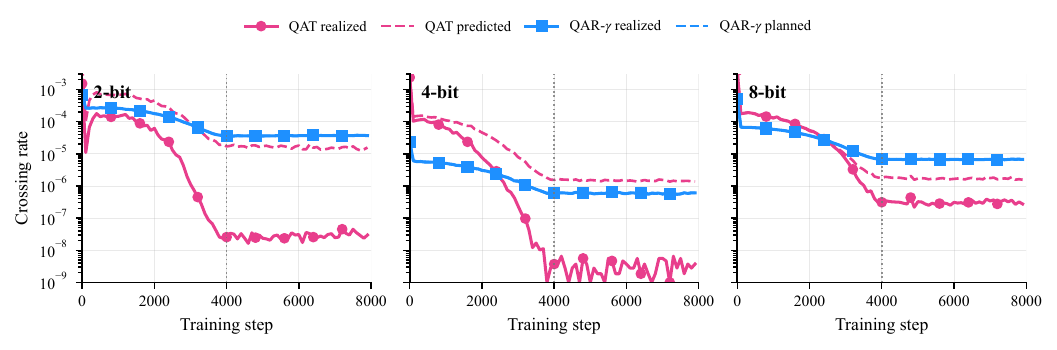}
\caption{Predicted or planned versus realized crossing rates for the best QAT and QAR-$\gamma$ settings. Dashed lines are predicted or planned; solid lines are realized.}
\label{fig:qwen25-0p5b-selected-qat-qar-crossing-rate}
\end{figure}

The late-stage collapse in Figures~\ref{fig:qwen25-0p5b-adam-qat-crossing-rate} and~\ref{fig:qwen25-0p5b-selected-qat-qar-crossing-rate} does not indicate a failure of the residual-phase model; it comes from BF16 rounding. BF16 values are not evenly spaced, and the gap between adjacent values grows with their magnitude. When an update is smaller than half the local gap around $x_i$, round-to-nearest maps $x_i+\Delta x_i$ back to $x_i$, so the shadow weight neither moves nor crosses a quantization boundary. More coordinates enter this regime as the learning rate decays. Figure~\ref{fig:qwen25-0p5b-adam-qat-crossing-fp32-ratio} repeats the diagnostic with FP32 shadow weights to separate this numerical effect from residual-phase calibration.

\begin{figure}[H]
\centering
\includegraphics[width=0.9\linewidth]{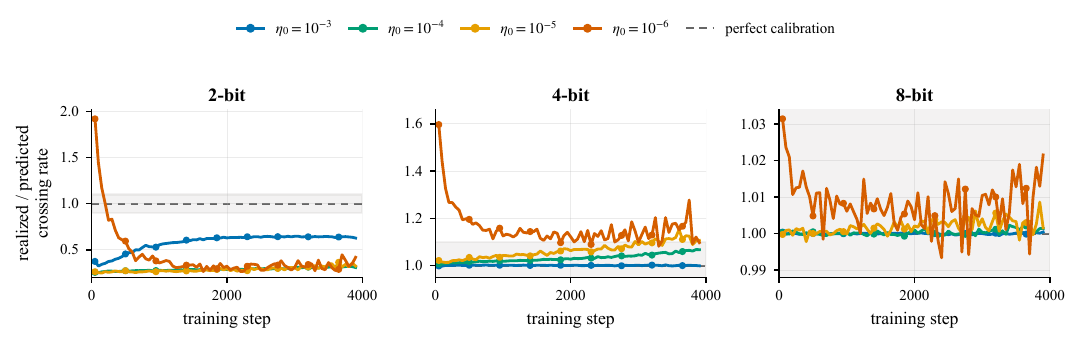}
\caption{FP32 control for Figure~\ref{fig:qwen25-0p5b-adam-qat-crossing-rate}. Curves show the ratio of the realized to the predicted boundary-crossing rate for each initial learning rate; the dashed line marks perfect calibration and the shaded band marks $\pm10\%$.}
\label{fig:qwen25-0p5b-adam-qat-crossing-fp32-ratio}
\end{figure}

With FP32, the late-training collapse disappears. At 4 and 8 bits, the realized-to-predicted ratios stay near one or modestly above it. Their gradual upward drift after the initial transient is consistent with the toy-model analysis of \citet{wenshoj2025oscillations}, in which a residual-driven STE component pushes shadow weights away from quantization levels and toward cell boundaries. This boundary concentration puts more residual mass in the crossing region than the uniform-phase model predicts, causing the realized crossing rate to exceed the prediction. The 2-bit ratios remain below one, consistent with the center-concentrated phase of its single non-saturated interior cell.

At $\eta_0=10^{-6}$, Figure~\ref{fig:qwen25-0p5b-adam-qat-crossing-fp32-ratio} shows an initial spike because the run starts from a BF16 checkpoint. BF16 stores weights on a discrete lattice, placing about $0.1\%$--$0.2\%$ of them exactly on quantization boundaries. Even a tiny first update can move these weights across a boundary, producing more crossings than the continuous phase model predicts. The spike fades once training moves the weights off the BF16 lattice. Thereafter, the realized-to-predicted ratios stay near one or slightly above it at 4 and 8 bits.

QAR does not depend on shadow-weight precision because it samples and applies code transitions directly, without accumulating updates in a shadow weight. It therefore follows the crossing law more faithfully than QAT.

\subsection{Memory Diagnostic}
\label{app:qwen25-0p5b-memory-diagnostic}

\paragraph{Persistent state.} Let $P$ be the total number of model parameters, $N$ the number of quantized coordinates updated by both methods, and $R$ the number of quantization groups. Both methods freeze the remaining $P-N$ parameters. If a shadow value, optimizer moment, and group scale each use $w$, $m$, and $r$ bytes, then
\[
 M_{\rm QAT}=w(P-N)+wN+2mN+rR,
 \qquad
 M_{\rm QAR}=w(P-N)+\frac{b}{8}N+2mN+rR,
\]
where QAR-$\gamma$ stores $b$-bit endpoint codes instead of a shadow tensor.  Its persistent-state saving is therefore $(w-b/8)N$ bytes.

\begin{table}[!htbp]
\caption{Numbers of total parameters $P$, quantized parameters $N$, and quantization groups $R$.}
\label{tab:qwen25-0p5b-memory-diagnostic}
\centering
\small
\setlength{\tabcolsep}{12pt}
\begin{tabular}{lrrr}
\toprule
Model & $P$ & $N$ & $R$ \\
\midrule
Qwen-2.5-0.5B & $494{,}032{,}768$ & $357{,}826{,}560$ & $304{,}128$ \\
Llama-3.2-3B & $3{,}212{,}749{,}824$ & $2{,}818{,}572{,}386$ & $774{,}144$ \\
\bottomrule
\end{tabular}
\end{table}

With BF16 weights and moments and FP32 group scales, the two totals are $2(P-N)+6N+4R$ and $2(P-N)+(4+b/8)N+4R$. Our implementation stores 2- and 4-bit endpoint codes in packed form and updates them with fused packed CUDA kernels. The resulting persistent-state savings at 2, 4, and 8 bits are $25.9\%$, $22.2\%$, and $14.8\%$ on Qwen-2.5-0.5B and $27.9\%$, $23.9\%$, and $15.9\%$ on Llama-3.2-3B, respectively.

\paragraph{Peak memory.} Peak allocation also depends on transient gradient buffers. In our implementation, we adopt a streamed update in which QAR consumes and releases each layer gradient during backward, while its fused sampler avoids model-shaped temporaries. For a fair comparison, we additionally stream the QAT AdamW update in the same way.

\subsection{Learning-Rate Ablation}
\label{app:qwen25-0p5b-lr-ablation}

\begin{figure}[htbp]
    \centering
    \includegraphics[width=0.9\linewidth]{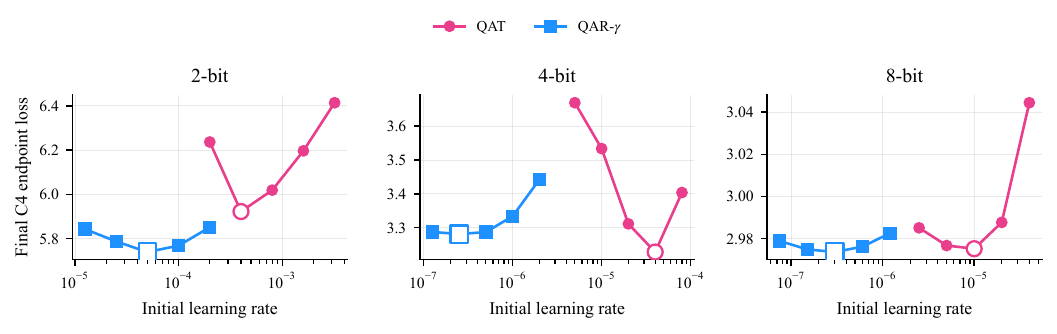}
\caption{Learning-rate sensitivity. Hollow markers denote the lowest final C4 endpoint loss within each displayed method curve.}
    \label{fig:qwen25-0p5b-app-qat-qar-lr-ablation}
\end{figure}

Figure~\ref{fig:qwen25-0p5b-app-qat-qar-lr-ablation} shows that QAR-$\gamma$ operates at a systematically smaller learning-rate scale. It is also less sensitive to the learning rate over each displayed sweep.

\subsection{Power-Exponent Ablation}
\label{app:qwen25-0p5b-gamma-ablation}

\begin{figure}[H]
    \centering
    \includegraphics[width=0.9\linewidth]{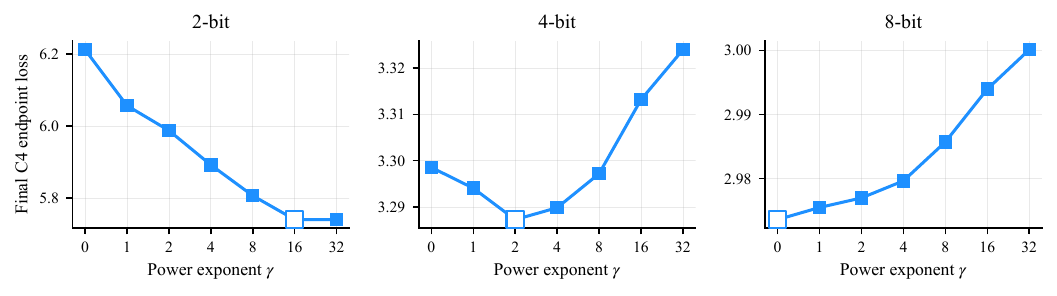}
\caption{QAR-$\gamma$ exponent ablation for $(2,4,8)$ bits. Hollow markers denote the lowest final C4 endpoint loss in each evaluated grid. Strong amplification helps on the 2-bit grid, moderate amplification is best at 4 bits, and uniform routing ($\gamma=0$) is best at 8 bits.}
    \label{fig:qwen25-0p5b-qar-gamma-ablation}
\end{figure}

Figure~\ref{fig:qwen25-0p5b-qar-gamma-ablation} gives the selection sweep over $\gamma\in\{0,1,2,4,8,16,32\}$. Final loss is minimized at $\gamma=16,2,0$ for 2, 4, and 8 bits, respectively. Thus stronger signal concentration is useful on the coarsest grid, but its value disappears as the grid becomes finer. This reflects the tradeoff between the signal imbalance and the stochastic noise. Because the stepsizes used here satisfy the one-cell condition $\eta|u_{t,i}|\le s_i$ at every step, the $\gamma=0$ arm is exactly the DQT update (Section~\ref{sec:qar-algorithm}), so the sweep also compares against a master-weight-free stochastic-rounding baseline.

\section{Residual-Phase Distributions}
\label{app:residual-phase-distributions}

\subsection{A High-Resolution Source of Uniform Phases}
\label{app:near-uniform-residual-phases}

The main text uses residual phases as a conditional model for endpoint crossings.  The following elementary bound gives a sufficient condition under which an unclipped uniform scalar quantizer has nearly uniform residual phases.

\begin{proposition}[Bounded-variation phase error]
\label{prop:bounded-variation-phase-error}
Let $X$ have density $p$ on $\mathbb R$, and let
\[
    Q_s(x)=s\left\lfloor \frac{x}{s}+\frac12\right\rfloor
\]
be nearest-neighbor uniform quantization with step size $s>0$.  Define
\[
    \theta_s=\frac{X-Q_s(X)}{s} \in \left[-\frac1 2,\frac1 2\right).
\]
If $p$ has bounded variation on $\mathbb R$, then $\theta_s$ has density
\[
    \pi_s(\theta)
    =
    s\sum_{k\in\mathbb Z} p\bigl(s(k+\theta)\bigr),
    \qquad \theta\in[-\tfrac1 2,\tfrac1 2),
\]
and
\[
    \sup_{\theta\in[-1/2,1/2)}
    |\pi_s(\theta)-1|
    \le
    s\,\mathrm{TV}(p).
\]
Here $\mathrm{TV}(p)$ denotes the total variation of $p$ on $\mathbb R$, defined as $\sup_{\{x_j\}}\sum_j |p(x_{j+1})-p(x_j)|$, where the supremum is over finite ordered partitions.  If $p$ is absolutely continuous, then $\mathrm{TV}(p)=\int_{\mathbb R}|p'(x)|\,\dd x$. Consequently,
\[
    d_{\mathrm{TV}}\!\left(
    \operatorname{Law}(\theta_s),\mathrm{Unif}[-\tfrac1 2,\tfrac1 2)
    \right)
    \le
    \frac{s\,\mathrm{TV}(p)}{2},
\]
where $d_{\mathrm{TV}}(\mu,\nu)$ means the total variation distance between two probability distributions $\mu$ and $\nu$. Moreover, the boundary-window mass
\[
    B_s(\rho)=\int_{\frac1 2-\rho}^{\frac1 2}\pi_s(\theta)\dd\theta
\]
satisfies
\[
    |B_s(\rho)-\rho|\le \rho\,s\,\mathrm{TV}(p),
    \qquad 0\le\rho\le1 .
\]
For fixed $p$, these bounds vanish linearly as $s\to0$.  Thus $\theta_s$ converges in total variation to $\mathrm{Unif}\!\left[-\frac1 2,\frac1 2\right)$, and $B_s(\rho)\to\rho$ uniformly for $0\le\rho\le 1$.
\end{proposition}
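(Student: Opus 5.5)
Our plan has four steps: derive the folded density by a change of variables, compare $\pi_s$ to its average via a Riemann-sum argument, and then integrate the resulting pointwise bound twice.

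\emph{Folded density.} For any bounded measurable $h$ on $[-\tfrac12,\tfrac12)$, we partition $\mathbb R$ into the cells $[s(k-\tfrac12),s(k+\tfrac12))$. On cell $k$ we have $Q_s(x)=sk$, and we substitute $x=s(k+\theta)$. This gives
\[
\E[h(\theta_s)]=\sum_{k\in\mathbb Z}\int_{-1/2}^{1/2}h(\theta)\,s\,p(s(k+\theta))\dd\theta .
\]
Tonelli's theorem, applicable since everything is nonnegative, lets us interchange the sum and integral. This identifies $\pi_s$ as stated, and shows $\int_{-1/2}^{1/2}\pi_s=\int p=1$. The bounded-variation assumption is needed here as well. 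It makes $p$ bounded, because $p\in L^1$ forces $\liminf_{|x|\to\infty}p=0$, so $\sup p\le \mathrm{TV}(p)$. It then makes the series $\sum_k p(s(k+\theta))$ finite for every $\theta$, which we check in the next step.

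\emph{Pointwise bound.} This is the core step. Fix $\theta$ and write $\pi_s(\theta)-1=\sum_k\bigl(s\,p(s(k+\theta))-\int_{I_k}p\bigr)$, where $I_k=[s(k-\tfrac12),s(k+\tfrac12))$ has length $s$. Each $I_k$ contains the sample point $s(k+\theta)$. Hence
\[
\Bigl|s\,p(s(k+\theta))-\int_{I_k}p\Bigr|\le\int_{I_k}|p(s(k+\theta))-p(y)|\dd y\le s\,\mathrm{TV}(p;\bar I_k),
\]
where $\mathrm{TV}(p;\bar I_k)$ is the variation on the closed cell. The cells overlap only at endpoints, so variation is additive over adjacent intervals and summing over $k$ gives at most $s\,\mathrm{TV}(p)$.

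We expect the main obstacle to be the endpoint and closedness bookkeeping. The intervals $\bar I_k$ share endpoints, and the variation over a union of adjacent closed intervals equals the sum of the variations. This needs a definition of TV that is pointwise, as in the statement. It also shows the series converges absolutely. If one prefers to avoid pointwise subtleties with non-representative versions of $p$, one can work with a good representative: the paper's pointwise definition of $\mathrm{TV}$ already fixes one.

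\emph{Consequences.} The total variation distance is $\tfrac12\int_{-1/2}^{1/2}|\pi_s-1|\le \tfrac12 s\,\mathrm{TV}(p)$. For the boundary mass, $|B_s(\rho)-\rho|=\bigl|\int_{1/2-\rho}^{1/2}(\pi_s-1)\bigr|\le\rho\,s\,\mathrm{TV}(p)$ for $0\le\rho\le1$. For $\rho$ up to $1$ the interval is the full cell, so this still holds. Both bounds are linear in $s$, which gives total-variation convergence and uniform convergence $B_s(\rho)\to\rho$ as $s\to0$.

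Finally, for absolutely continuous $p$ the identity $\mathrm{TV}(p)=\int|p'|$ is the standard characterization. We will cite it rather than reprove it.
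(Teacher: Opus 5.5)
Your proposal is correct and follows essentially the same argument as the paper: fold each cell via $x=s(k+\theta)$, write $\pi_s(\theta)-1$ as a sum over a partition of $\mathbb R$ into length-$s$ intervals each containing the sample point $s(k+\theta)$, bound each term by $s$ times the local variation, sum, and then integrate the pointwise bound. The only difference is cosmetic. The paper centers its intervals at the sample points, $[s(k+\theta)-\tfrac s2,\,s(k+\theta)+\tfrac s2)$, whereas you use the quantization cells themselves. You also spell out details the paper leaves implicit: finiteness of the series, boundedness of $p$, and additivity of variation.
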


\begin{proof}
The residual-phase map folds each cell $\left[s\bigl(k-\frac1 2\bigr),s\bigl(k+\frac1 2\bigr)\right)$ onto $\left[-\frac1 2, \frac1 2\right)$.  On the cell centered at $sk$, the inverse branch is $x=s(k+\theta)$, so a change of variables contributes $s\,p(s(k+\theta))$ to the density of $\theta_s$.  Summing over $k\in\mathbb Z$ gives the displayed density.  Now fix $\theta$ and write $x_k=s(k+\theta)$.  The intervals $I_k=\left[x_k-\frac s 2,x_k+\frac s 2\right)$ partition $\mathbb R$.  Since $\int p(x)\dd x=1$,
\[
    \pi_s(\theta)-1
    =
    \sum_{k\in\mathbb Z}
    \int_{I_k}\bigl(p(x_k)-p(x)\bigr)\dd x .
\]
For each interval $I_k$,
\[
    |p(x_k)-p(x)|
    \le
    \mathrm{TV}(p;I_k),
    \qquad x\in I_k,
\]
where $\mathrm{TV}(p;I_k)= \sup_{\{y_j\}\subset I_k}\sum_j|p(y_{j+1})-p(y_j)|$ is the variation of $p$ on $I_k$, with the supremum over finite ordered partitions of $I_k$.  Hence
\[
    |\pi_s(\theta)-1|
    \le
    \sum_{k\in\mathbb Z}s\,\mathrm{TV}(p;I_k)
    \le
    s\,\mathrm{TV}(p).
\]
The total-variation and boundary-window bounds follow by integrating this pointwise density bound over intervals of length $1$ and $\rho$, respectively.
\end{proof}

\subsection{Normality of Pretrained Weights}
\label{app:pretrained-normal-weight-diagnostics}

This subsection gives additional evidence for the pretrained normal-weight phenomenon for Qwen-2.5-0.5B. We collect linear weight tensors, exclude embeddings and the LM head, rescale each layer, and draw parameter-weighted samples before overlaying the standard-normal density. Figure~\ref{fig:pretrained-normal-weight-diagnostics} reports the pooled diagnostic. This plot supports the modeling step used in the main text: after local rescaling, the central pretrained weight mass is well approximated by a centered normal law, which induces the folded residual phase model for finite grids.

\subsection{Two-Bit and Folded Residual Phase Distributions}
\label{app:two-bit-residual-distribution}

The main text uses residual phases to explain when shadow motion becomes endpoint motion.  We now spell out a simple folded-normal model for the pooled residual phase distribution. Let $X\sim\mathcal N(0,\tau^2)$ be a scalar pretrained-weight model, where $\tau$ is the weight standard deviation.  For a symmetric $b$-bit quantizer with clipping threshold $c>0$, write
\[
    K_b=2^{b-1}-1,\qquad
    s_b=\frac{c}{K_b}, \qquad
    \lambda_b=\frac{\tau}{s_b}.
\]
The grid levels are $s_b k$ for integers $-K_b\le k\le K_b$, so $c=K_b s_b$ is the largest positive representable value.  Thus $\lambda_b$ measures the pretrained weight scale in grid-step units. The pooled interior residual phase has density
\[
    \pi(\theta)
    =
    \frac{
        \sum_{k=-K_b+1}^{K_b-1}
        \exp\!\left(-\frac{(k+\theta)^2}{2\lambda_b^2}\right)
    }{
        \int_{-1/2}^{1/2}
        \sum_{k=-K_b+1}^{K_b-1}
        \exp\!\left(-\frac{(k+v)^2}{2\lambda_b^2}\right)\dd v
    },
    \qquad \theta \in \left[-\frac1 2, \frac1 2\right) .
\]
At 2 bits, $K_2=1$, so the sum contains only the zero cell and $\pi(\cdot)$ is the density of a truncated normal on $\left[-\frac1 2, \frac1 2\right)$.  For higher bit widths, the histogram pools many interior cells modulo the grid spacing. With fixed $\tau$ and endpoint $c$, $\lambda_b=K_b\lambda_2$, and the folded sum becomes nearly flat within a cell.

\begin{center}
\centering
\begin{minipage}[t]{0.45\linewidth}
\centering
\includegraphics[width=0.9\linewidth]{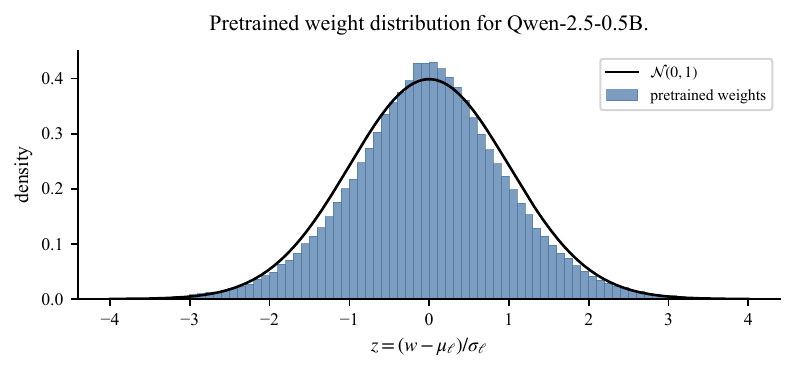}
\captionsetup{hypcap=false} \captionof{figure}{Pooled pretrained weight distribution for Qwen-2.5-0.5B. The plot uses linear weight tensors excluding embeddings and the LM head, standardizes by layer, and draws parameter-weighted samples.} \phantomsection
\label{fig:qwen25-0p5b-normal-weight}
\label{fig:pretrained-normal-weight-diagnostics}
\end{minipage}\hfill
\begin{minipage}[t]{0.52\linewidth}
\centering
\includegraphics[width=\linewidth]{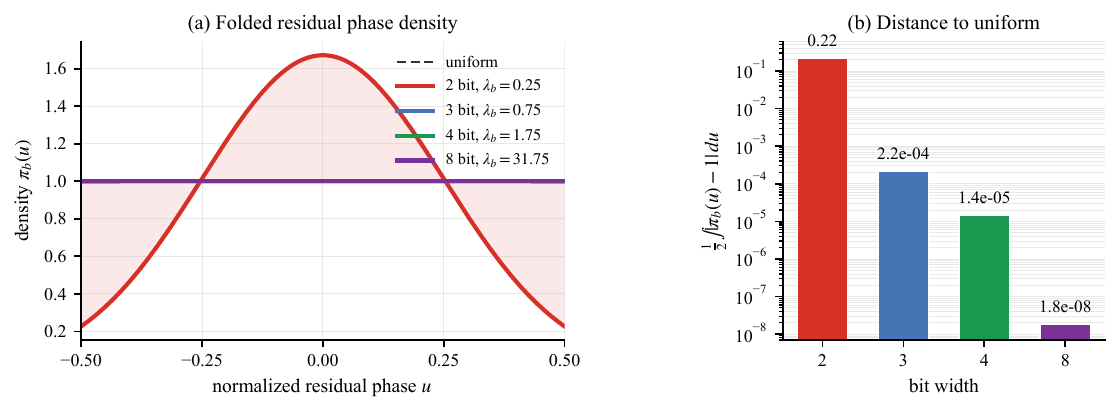}
\captionsetup{hypcap=false} \captionof{figure}{Predicted folded residual phase density $\pi_b(\theta)$ compared with the uniform density on $[-1/2,1/2)$. The left panel uses $\lambda_2=0.25$ and $\lambda_b=K_b\lambda_2$, and the right panel reports the total-variation distance from the uniform model.} \phantomsection
\label{fig:folded-phase-density}
\end{minipage}
\end{center}

Figure~\ref{fig:folded-phase-density} makes the scale separation explicit. With the 2-bit fit $\lambda_2=0.25$, $\pi(\cdot)$ is center-concentrated and has substantial distance from the uniform cell-phase model.  The same absolute weight scale makes the 3-, 4-, and 8-bit folded densities nearly uniform, with total-variation distance dropping by roughly three orders of magnitude already at 3 bits. Figure~\ref{fig:qwen25-0p5b-residual-homogeneity} reports the corresponding empirical Qwen-2.5-0.5B residual histograms at the initial checkpoint.

Figure~\ref{fig:qwen25-0p5b-residual-phase-ks-over-time} shows that QAT changes the residual distribution only modestly after the initial adjustment. The dominant difference is across bit widths: 2-bit remains the least uniform, while 4 and 8 bits stay close to uniform.

\begin{figure}[!htbp]
\centering
\includegraphics[width=0.96\linewidth]{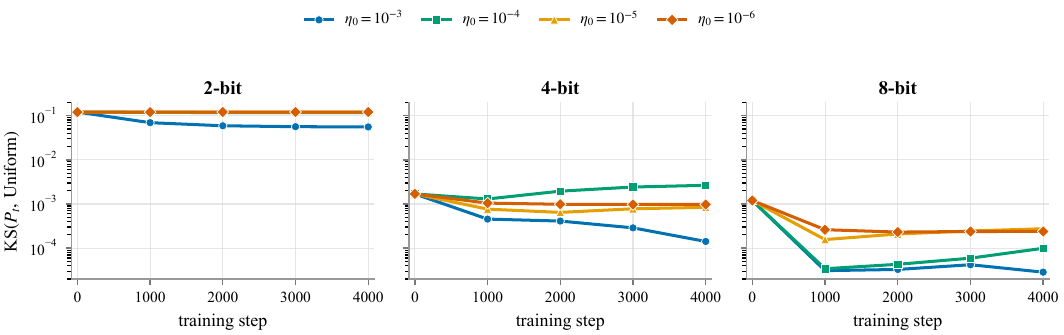}
\caption{KS distance from the uniform residual-phase model over QAT training steps.}
\label{fig:qwen25-0p5b-residual-phase-ks-over-time}
\end{figure}
\FloatBarrier

\section{Proofs for Crossing and Endpoint Motion}
\label{app:proofs-cell-crossing-endpoint-motion}

\subsection{Proof of Lemma~\ref{lem:coordinatewise-crossing-sign}}
\label{app:proof-lemma-alignment}
\begin{proof}[Proof of Lemma~\ref{lem:coordinatewise-crossing-sign}]
Fix coordinate $i$.  If $u_i>0$, then
\[
    x_i^+=x_i-\eta u_i<x_i.
\]
For the fixed coordinatewise uniform quantizer, decreasing $x_i$ cannot increase the endpoint, so
\[
    q_i^+=Q_b(\mathbf{x}^+)_i\le Q_b(\mathbf{x})_i=q_i.
\]
Hence $q_i^+-q_i\le0$, and multiplying by $u_i>0$ gives $u_i(q_i^+-q_i)\le0$.  If $u_i<0$, then $x_i^+>x_i$, so the same fixed grid ordering gives $q_i^+\ge q_i$, and again $u_i(q_i^+-q_i)\le0$.  If $u_i=0$, the product is zero.  If $g_i u_i\ge0$ coordinatewise, then $g_i$ has the same sign as $u_i$ whenever both are nonzero, so $g_i(q_i^+-q_i)\le0$ as well. Summing over coordinates proves the inner-product statement.
\end{proof}

\subsection{Proof of Proposition~\ref{prop:realized-crossing-descent}}
\label{app:proof-realized-endpoint}

\begin{proof}[Proof of Proposition~\ref{prop:realized-crossing-descent}]
Under the stated small-step condition, a crossed active coordinate moves by one neighboring grid level in the direction opposite to $u_i$, while a non-crossed coordinate does not move.  Hence
\[
    q_i^+-q_i=-s_i\sign(u_i)J_i.
\]
Applying Assumption~\ref{ass:smooth} with $\mathbf{x}=\mathbf{q}$ and $\mathbf{y}=\mathbf{q}^+$ gives
\[
    f(\mathbf{q}^+)-f(\mathbf{q})
    \le
    \ip{\mathbf{g}}{\mathbf{q}^+-\mathbf{q}}
    +
    \frac12\sum_iL_i(q_i^+-q_i)^2 .
\]
Substituting the realized jump representation yields
\[
    \ip{\mathbf{g}}{\mathbf{q}^+-\mathbf{q}}
    =
    -\sum_iJ_i s_i\sign(u_i)g_i,
    \qquad
    (q_i^+-q_i)^2=J_i s_i^2,
\]
which proves \eqref{eq:realized-crossing-descent-bound}.  If $g_i u_i\ge0$ and $J_i=1$, then $u_i\ne0$ and $\sign(u_i)g_i=|g_i|$.
\end{proof}

\subsection{Crossing probability of a single coordinate}
\label{app:crossing-probability}

\begin{corollary}[One-coordinate crossing probability]
\label{cor:crossing-probability}
Consider one non-saturated coordinate $i$ with
\[
    r_i\in\left[-\frac{s_i}{2},\frac{s_i}{2}\right),\qquad
    x_i^+=x_i-\eta u_i,\qquad
    \eta |u_i|<s_i.
\]
The endpoint crosses only when $r_i$ lies in the boundary window
\[
    I_i(u_i)
    =
    \begin{cases}
    \left[-\frac{s_i}{2},\,-\frac{s_i}{2}+\eta u_i\right), & u_i>0,\\
    \left[\frac{s_i}{2}+\eta u_i,\,\frac{s_i}{2}\right), & u_i<0,\\
    \varnothing, & u_i=0.
    \end{cases}
\]
On this event,
\[
    q_i^+-q_i=-s_i\,\sign(u_i).
\]
Hence, if $\pi_i(\cdot\mid \mathbf{q})$ is the conditional density of $r_i$, then the crossing probability is
\begin{equation}
\label{eq:general-one-coordinate-crossing-probability}
    p_i
    =
    \int_{I_i(u_i)}\pi_i(v\mid \mathbf{q})\dd v,
    \qquad
    \E[q_i^+-q_i\mid \mathbf{q}]
    =
    -s_i\,\sign(u_i)\,p_i .
\end{equation}
\end{corollary}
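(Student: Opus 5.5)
The plan is a direct case analysis on the sign of $u_i$, using only that $Q_b$ is coordinatewise nondecreasing and that the update moves $x_i$ by strictly less than one cell width.

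\emph{Geometry of one step.} Write $x_i=q_i+r_i$, where $q_i=s_ik_i$ with $-K_b<k_i<K_b$. Non-saturation means $x_i$ lies in the interior cell $C_{i,k_i}=[s_i(k_i-\tfrac12),s_i(k_i+\tfrac12))$, so $r_i\in[-\tfrac{s_i}{2},\tfrac{s_i}{2})$.

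\textbf{Case $u_i>0$.} The update gives $x_i^+=x_i-\eta u_i$, so $x_i^+<x_i$ and the shadow moves left by $\eta u_i<s_i$.
\begin{itemize}[leftmargin=1.5em]
\item By Lemma~\ref{lem:coordinatewise-crossing-sign}, $q_i^+\le q_i$.
\item The endpoint changes iff $x_i^+<s_i(k_i-\tfrac12)$, i.e. iff $r_i-\eta u_i<-\tfrac{s_i}{2}$. This is exactly $r_i\in[-\tfrac{s_i}{2},-\tfrac{s_i}{2}+\eta u_i)$, the window $I_i(u_i)$.
\item On this event, $x_i^+\ge x_i-s_i\ge s_i(k_i-\tfrac32)$. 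Hence $x_i^+$ lies in the adjacent cell with code $k_i-1\ge -K_b$. If $k_i-1=-K_b$, clamping still returns $-K_b$, because rounding gives $-K_b$ on that half-open interval.
\item Therefore $q_i^+-q_i=-s_i=-s_i\sign(u_i)$.
\end{itemize}

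\textbf{Case $u_i<0$.} This is symmetric. The crossing condition is $r_i-\eta u_i\ge\tfrac{s_i}{2}$, i.e. $r_i\in[\tfrac{s_i}{2}+\eta u_i,\tfrac{s_i}{2})$. The half-open endpoint conventions match because the cells are left-closed. The jump is then $+s_i=-s_i\sign(u_i)$.

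\textbf{Case $u_i=0$.} Here $x_i^+=x_i$, so there is no movement, and the window is empty.

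\emph{Probabilities.} Since $J_i=\one\{r_i\in I_i(u_i)\}$, integrating the conditional density of $r_i$ given $\mathbf q$ over the window gives $p_i=\int_{I_i(u_i)}\pi_i(v\mid\mathbf q)\dd v$. Because $q_i^+-q_i=-s_i\sign(u_i)J_i$ pointwise, taking conditional expectations yields $\E[q_i^+-q_i\mid\mathbf q]=-s_i\sign(u_i)p_i$. In this step $u_i$ is treated as fixed given $\mathbf q$, being a function of $\nabla f(\mathbf q)$.

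\emph{Expected obstacle.} The only delicate point is the boundary bookkeeping: the half-open endpoints under round-to-nearest, and the claim that a neighbouring code reached from a non-saturated cell is never clipped further. Both are handled by the strict inequality $\eta|u_i|<s_i$ together with $|k_i|<K_b$.
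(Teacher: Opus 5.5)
Your proposal is correct and follows the same route as the paper. Both split on $\sign(u_i)$, identify the crossing event as $r_i-\eta u_i<-\frac{s_i}{2}$ (resp.\ $r_i-\eta u_i\ge\frac{s_i}{2}$), and then integrate the conditional density over the window and multiply by the signed jump. Your extra bookkeeping makes explicit what the paper compresses into ``one threshold is crossed'': that $\eta|u_i|<s_i$ lands $x_i^+$ in the adjacent cell, and that non-saturation keeps clamping from undoing the move. One caveat: your justification that $u_i$ is fixed given $\mathbf q$ because it is a function of $\nabla f(\mathbf q)$ holds only for a stateless route map. For a history-dependent $\mathcal U_t$ such as Adam, the conditioning should be on $(\eta,\mathbf q,\mathbf u)$, as in Model~\ref{model:residual-general}.
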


\begin{proof}[Proof of Corollary~\ref{cor:crossing-probability}]
Fix coordinate $i$.  If $u_i>0$, the update moves left.  A left crossing occurs precisely when
\[
    r_i-\eta u_i<-\frac{s_i}{2},
\]
which is equivalent to the stated interval.  Since one threshold is crossed, the endpoint moves by $q_i^+-q_i=-s_i$.  The case $u_i<0$ is symmetric: the update moves right, and the crossing condition is
\[
    r_i-\eta u_i\ge\frac{s_i}{2}.
\]
The expectation follows by multiplying the signed jump by its crossing probability.  Integrating the conditional density $\pi_i(\cdot\mid \mathbf{q})$ over the crossing interval gives \eqref{eq:general-one-coordinate-crossing-probability}.
\end{proof}

\subsection{Proof of Theorem~\ref{thm:active-set-drift}}
\label{app:proof-conditional-descent}

\begin{proof}[Proof of Theorem~\ref{thm:active-set-drift}]
Model~\ref{model:residual-general} gives crossing indicators satisfying
\[
    \E[J_i\mid \eta,\mathbf{q},\mathbf{u}]
    =
    p_i(\eta,\mathbf{q},\mathbf{u}).
\]
Under the stated stepsize condition, a crossing in coordinate $i$ has signed jump $-s_i\sign(u_i)$, and no crossing has jump zero.  Therefore
\[
    q_i^+-q_i=-s_i\sign(u_i)J_i.
\]
Collecting these coordinatewise identities proves \eqref{eq:stochastic-endpoint-jump}. Moreover, $(q_i^+-q_i)^2=s_i^2J_i$.
Applying Assumption~\ref{ass:smooth} to the full endpoint jump gives
\[
\begin{aligned}
    \E[f(\mathbf{q}^+)-f(\mathbf{q})\mid \eta,\mathbf{q},\mathbf{u}]
    &\le
    \E\!\left[
        \ip{\mathbf{g}}{\mathbf{q}^+-\mathbf{q}}
        +
        \frac12\sum_iL_i(q_i^+-q_i)^2
        \,\middle|\, \eta,\mathbf{q},\mathbf{u}
    \right]\\
    &=
    -\sum_i p_i(\eta,\mathbf{q},\mathbf{u})s_i\sign(u_i)g_i
    +
    \frac12\sum_i p_i(\eta,\mathbf{q},\mathbf{u})L_i s_i^2.
\end{aligned}
\]
This proves \eqref{eq:general-crossing-endpoint-bound}.  Under Model~\ref{model:residual}, $p_i(\eta,\mathbf{q},\mathbf{u})=\eta|u_i|/s_i$, so the last display becomes \eqref{eq:coordinate-smooth-endpoint-bound}.
\end{proof}

\section{Power-Amplifier Guarantees}
\label{app:power-amplifier-theory}

\subsection{Proof of Theorem~\ref{thm:qar-uniform-power-nonconvex}}

For each scale-sharing group $G$, at an endpoint $q_i=s_Gk_i$, define
\[
    M_i=\one\{k_i-\sign(u_i)\in\mathcal K_i\},
    \qquad a_i=M_i|u_i|,\qquad i\in G.
\]
\begin{equation}
\label{eq:power-score}
    b_{\gamma,i}
    =\begin{cases}
    \displaystyle\frac{a_i^\gamma}
    {|G|^{-1}\sum_{j\in G}a_j^\gamma},
       & \gamma>0\ \text{and }\sum_{j\in G}a_j^\gamma>0,\\[4pt]
    1, & \text{else},
    \end{cases}
    \qquad i\in G.
\end{equation}
Thus the uncapped transition probability is
\begin{equation}
\label{eq:power-probability}
    p_i=\frac{\eta a_i b_{\gamma,i}}{s_G}.
\end{equation}

\begin{lemma}[Power-score properties]
\label{lem:power-score}
For every group $G$ and $\gamma\ge0$,
\begin{equation}
\label{eq:power-score-properties}
    \sum_{i\in G}b_{\gamma,i}=|G|,
    \qquad
    \sum_{i\in G}b_{\gamma,i}a_i
    \ge \sum_{i\in G}a_i,
    \qquad
    \sum_{i\in G}b_{\gamma,i}a_i^2
    \ge \sum_{i\in G}a_i^2.
\end{equation}
For $\gamma>0$, both inequalities are strict exactly when the signals in the group are not all equal.
\end{lemma}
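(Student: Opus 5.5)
The three claims are proved separately, with the degenerate cases handled first. If $\gamma=0$, or if $\sum_{j\in G}a_j^\gamma=0$, then $b_{\gamma,i}=1$ for every $i\in G$. In that case all three relations hold with equality, and equality is consistent with the strictness claim. When $\gamma>0$ and some $a_j>0$, every $a_j$ is positive, so the signals in the group are not all equal; this is the first half of the strictness claim in the zero-sum case.

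From here on assume $\gamma>0$ and $S_\gamma:=\sum_{j\in G}a_j^\gamma>0$. The normalization is immediate: $\sum_i b_{\gamma,i}=|G|\,S_\gamma/S_\gamma=|G|$.

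For the two inequalities, fix $m\in\{1,2\}$. The claim $\sum_i b_{\gamma,i}a_i^m\ge\sum_i a_i^m$ is equivalent to
\[
    |G|\sum_{i\in G}a_i^{\gamma+m}\ \ge\ \Big(\sum_{i\in G}a_i^\gamma\Big)\Big(\sum_{i\in G}a_i^m\Big).
\]
This is Chebyshev's sum inequality, since the maps $a\mapsto a^\gamma$ and $a\mapsto a^m$ are both nondecreasing on $[0,\infty)$. To make it self-contained, I would use the symmetrization identity
\[
    |G|\sum_i a_i^{\gamma+m}-\sum_i a_i^\gamma\sum_j a_j^m
    =\frac12\sum_{i,j\in G}\big(a_i^\gamma-a_j^\gamma\big)\big(a_i^m-a_j^m\big).
\]
Each summand on the right is nonnegative because both factors carry the sign of $a_i-a_j$.

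For strictness, the main care is the exact characterization. For $\gamma>0$ and $m\ge1$, both maps are strictly increasing on $[0,\infty)$, including at $0$. Hence a summand is positive exactly when $a_i\ne a_j$, so the sum is positive if and only if the $a_i$ are not all equal. If all $a_i$ are equal (and positive), then $b_{\gamma,i}=1$ and equality holds. This matches "strict exactly when the signals are not all equal." No step is a real obstacle; the only subtlety is checking the $S_\gamma=0$ convention against the strictness clause, and that case forces all $a_i=0$.
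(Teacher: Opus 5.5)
Your proof is correct and follows the same route as the paper. Both reduce the two inequalities to Chebyshev's sum inequality for the nondecreasing maps $z\mapsto z^\gamma$ and $z\mapsto z^m$, after setting aside $\gamma=0$ and the all-zero group; your symmetrization identity also makes explicit the ``exactly when'' strictness characterization that the paper only asserts.

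One slip to fix: the claim that some $a_j>0$ forces every $a_j$ to be positive is false. It should say that $\gamma>0$ with $S_\gamma=0$ forces every $a_j=0$, so the signals are all equal and equality is the right outcome, as your last paragraph already states.
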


\begin{proof}
The claim is immediate for $\gamma=0$ and for an all-zero group.  Otherwise, the normalization in \eqref{eq:power-score} gives $\sum_i b_{\gamma,i}=|G|$.  For $r\in\{1,2\}$, the maps $z\mapsto z^\gamma$ and $z\mapsto z^r$ are nondecreasing on $\R_+$, so the finite-sum Chebyshev inequality gives
\[
    \frac1{|G|}\sum_{i\in G}a_i^{\gamma+r}
    \ge
    \left(\frac1{|G|}\sum_{i\in G}a_i^\gamma\right)
    \left(\frac1{|G|}\sum_{i\in G}a_i^r\right).
\]
Dividing by the first parenthesized factor proves both inequalities in \eqref{eq:power-score-properties}. Strictness follows when two signals differ.
\end{proof}

For each group, let $L_G=\max_{i\in G}L_i$.  Assumption~\ref{ass:smooth} implies the block majorizer
\begin{equation}
\label{eq:power-block-majorizer}
    f(\mathbf y)
    \le f(\mathbf q)+\langle\nabla f(\mathbf q),\mathbf y-\mathbf q\rangle
    +\frac12\sum_GL_G
       \|\mathbf y_G-\mathbf q_G\|_2^2.
\end{equation}

\begin{proposition}[One-step power-amplifier bound]
\label{prop:power-one-step}
Suppose the route is the exact gradient and the probabilities are uncapped. For $\mathcal A_\gamma$ with any $\gamma\ge0$, let $b_i=b_{\gamma,i}$. Then
\begin{equation}
\label{eq:power-one-step}
\begin{aligned}
    \E[f(\mathbf q^+)-f(\mathbf q)\mid\mathbf q]
    &\le-\eta\sum_G\sum_{i\in G}b_i a_i^2
      +\frac\eta2\sum_G L_Gs_G\sum_{i\in G}b_i a_i\\
    &\le-\frac\eta2\sum_G\sum_{i\in G}b_i a_i^2
      +\frac\eta8\sum_G|G|L_G^2s_G^2\\
    &\le-\frac\eta2\|\mathbf g_{\mathcal Q}(\mathbf q)\|_2^2
      +\frac\eta8\sum_G|G|L_G^2s_G^2.
\end{aligned}
\end{equation}
\end{proposition}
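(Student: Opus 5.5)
We prove the three inequalities in \eqref{eq:power-one-step} in order. The first is a one-step expected-descent computation in the style of Theorem~\ref{thm:active-set-drift}. The second applies Young's inequality groupwise. The third uses Lemma~\ref{lem:power-score} together with a coordinatewise comparison of $a_i$ with the feasible gradient.

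\textbf{First inequality.} With route $\mathbf u=\mathbf g=\nabla f(\mathbf q)$, Algorithm~\ref{alg:qar} moves coordinate $i$ by $q_i^+-q_i=-s_G\,\sign(g_i)J_i$, where $J_i\sim\mathrm{Bernoulli}(p_i)$ and $p_i=\eta a_ib_i/s_G$ is uncapped. Apply the block majorizer \eqref{eq:power-block-majorizer} with $\mathbf y=\mathbf q^+$ and take the conditional expectation. Because the jumps are signed by $\sign(g_i)$, the linear term becomes $-\sum_i p_i s_G|g_i|$. Infeasible coordinates have $M_i=0$, so $p_i=0$ there, and on feasible coordinates $|g_i|=a_i$; hence the linear term equals $-\eta\sum_i b_ia_i^2$. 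The quadratic term is $\tfrac12\sum_G L_G\sum_{i\in G}s_G^2\,\E[J_i]=\tfrac\eta2\sum_G L_Gs_G\sum_{i\in G}b_ia_i$. No cross terms arise, since $\|\mathbf y_G-\mathbf q_G\|_2^2$ is a sum of squares and $J_i^2=J_i$.

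\textbf{Second inequality.} We need to absorb the linear-in-$a$ penalty. For each $i\in G$, Young's inequality gives
\[
L_Gs_Ga_i\le a_i^2+\tfrac14L_G^2s_G^2 .
\]
Multiply by $b_i\ge0$ and sum over $i\in G$. Lemma~\ref{lem:power-score} gives $\sum_{i\in G}b_i=|G|$, so
\[
\tfrac\eta2\sum_{i\in G}L_Gs_Gb_ia_i\le\tfrac\eta2\sum_{i\in G}b_ia_i^2+\tfrac\eta8|G|L_G^2s_G^2 .
\]
Combining this with the first inequality leaves $-\tfrac\eta2\sum b_ia_i^2+\tfrac\eta8\sum_G|G|L_G^2s_G^2$. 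The constant $1/4$ in Young's inequality is exactly what produces the floor $\tfrac18\sum_G|G|L_G^2s_G^2$.

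\textbf{Third inequality.} By the power-score inequality $\sum_{i\in G}b_ia_i^2\ge\sum_{i\in G}a_i^2$ from Lemma~\ref{lem:power-score}, it suffices to check coordinatewise that $a_i^2=(\mathbf g_{\mathcal Q}(\mathbf q))_i^2$. With $u_i=g_i$ this holds because $M_i=\one\{k_i-\sign(g_i)\in\mathcal K_i\}$ coincides with the indicator in \eqref{eq:feasible-endpoint-gradient}.

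The only delicate point is the bookkeeping in the first step. We must check that the mask makes infeasible coordinates contribute nothing to either term, and that coordinates with $g_i=0$ are harmless because then $a_i=0$. Everything else is algebra.
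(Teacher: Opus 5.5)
Your proposal is correct and follows essentially the same argument as the paper. Both proofs substitute the uncapped probability $p_i=\eta a_ib_i/s_G$ into the block majorizer \eqref{eq:power-block-majorizer}, absorb the linear curvature term with the scalar inequality $L_Gs_Ga_i\le a_i^2+\tfrac14L_G^2s_G^2$ together with $\sum_{i\in G}b_i=|G|$, and finish with Lemma~\ref{lem:power-score} and the identity $a_i=|(\mathbf g_{\mathcal Q}(\mathbf q))_i|$. Your explicit check that the mask zeroes infeasible coordinates and that $g_i=0$ forces $a_i=0$ spells out the paper's one-line identification of $a_i$ with the feasible gradient.
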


\begin{proof}
For the exact-gradient route, $a_i=|(\mathbf g_{\mathcal Q}(\mathbf q))_i|$.  An accepted jump in coordinate $i\in G$ contributes $-s_Ga_i$ to the linear term of \eqref{eq:power-block-majorizer} and $s_G^2$ to its squared displacement.  Substituting $p_i=\eta a_ib_i/s_G$ gives the first line.  For $c_G=L_Gs_G$, the scalar inequality
\[
    -a_i^2+\frac{c_G}{2}a_i
    \le -\frac12a_i^2+\frac18c_G^2
\]
and $\sum_{i\in G}b_i=|G|$ give the second line.  The final line follows from Lemma~\ref{lem:power-score}, it is an equality at $\gamma=0$.
\end{proof}

\begin{proposition}[One-step signGD bound]
\label{prop:power-sign-one-step}
Suppose $\mathbf u=\sign(\nabla f(\mathbf q))$ and the probabilities are uncapped. For $\mathcal A_\gamma$ with any $\gamma\ge0$, let $b_i=b_{\gamma,i}$. Then
\begin{equation}
\label{eq:power-sign-one-step}
\begin{aligned}
    \E[f(\mathbf q^+)-f(\mathbf q)\mid\mathbf q]
    &\le-\eta\sum_G\sum_{i\in G}
       b_i\big|\big(\mathbf g_{\mathcal Q}(\mathbf q)\big)_i\big|
      +\frac\eta2\sum_GL_Gs_G\sum_{i\in G}b_i a_i\\
    &\le-\eta\|\mathbf g_{\mathcal Q}(\mathbf q)\|_1
      +\frac\eta2\sum_G|G|L_Gs_G.
\end{aligned}
\end{equation}
\end{proposition}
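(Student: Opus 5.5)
The plan is to repeat the argument of Proposition~\ref{prop:power-one-step}, now with the sign route. First I would record what $\mathbf u=\sign(\nabla f(\mathbf q))$ does to the quantities in \eqref{eq:power-score}. Since $\sign(u_i)=\sign(g_i)$, the mask $M_i=\one\{k_i-\sign(g_i)\in\mathcal K_i\}$ coincides with the indicator in \eqref{eq:feasible-endpoint-gradient}. Hence $a_i=M_i|\sign(g_i)|\in\{0,1\}$, and
\[
a_i|g_i|=\big|\big(\mathbf g_{\mathcal Q}(\mathbf q)\big)_i\big| .
\]
Next, an accepted jump in coordinate $i\in G$ moves $q_i$ by $-s_G\sign(g_i)$. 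Algorithm~\ref{alg:qar} keeps this jump inside $\mathcal K_i$ thanks to the mask, and it changes only one grid cell. In the block majorizer \eqref{eq:power-block-majorizer} such a jump contributes $-s_G|g_i|$ to the linear term and $s_G^2$ to the squared displacement.

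Taking the conditional expectation with the uncapped probabilities $p_i=\eta a_ib_i/s_G$ from \eqref{eq:power-probability} gives
\[
\E[f(\mathbf q^+)-f(\mathbf q)\mid\mathbf q]\le-\eta\sum_G\sum_{i\in G}b_i a_i|g_i|+\frac{\eta}{2}\sum_G L_G s_G\sum_{i\in G}b_ia_i .
\]
Substituting $a_i|g_i|=|(\mathbf g_{\mathcal Q})_i|$ yields the first line of \eqref{eq:power-sign-one-step}. Linearity of expectation suffices here, because the majorizer is coordinatewise separable apart from the block constants; no independence between coordinates is needed.

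For the second line I would treat the two terms separately. The penalty term is easy: $a_i\le1$ and $\sum_{i\in G}b_i=|G|$ (Lemma~\ref{lem:power-score}) give $\sum_{i\in G}b_ia_i\le|G|$.

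The signal term is where I expect the only real subtlety. Lemma~\ref{lem:power-score} compares $\sum b_ia_i$ with $\sum a_i$, but here the weights multiply $|(\mathbf g_{\mathcal Q})_i|$, which is not a power of $a_i$. I would therefore compute $b_i$ explicitly instead of invoking Chebyshev. For $\gamma=0$, or for a group with all $a_i=0$, we have $b_i\equiv1$ and the claim is an equality. Otherwise $a_i^\gamma=a_i$ because $a_i$ is binary. Writing $m_G=\sum_{j\in G}a_j\in\{1,\dots,|G|\}$, this gives $b_i=|G|a_i/m_G$. Since $(\mathbf g_{\mathcal Q})_i\neq0$ forces $a_i=1$,
\[
\sum_{i\in G}b_i|(\mathbf g_{\mathcal Q})_i|=\frac{|G|}{m_G}\sum_{i\in G}|(\mathbf g_{\mathcal Q})_i|\ge\sum_{i\in G}|(\mathbf g_{\mathcal Q})_i| .
\]
Summing over groups yields $-\eta\|\mathbf g_{\mathcal Q}(\mathbf q)\|_1$, which completes the second line.
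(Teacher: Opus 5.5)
Your proposal is correct and takes essentially the same route as the paper. You substitute the uncapped probabilities $p_i=\eta a_ib_i/s_G$ into the block majorizer for the first line, and for the second line you use that binary signals give $b_i=|G|/m_G\ge1$ on active coordinates ($b_i\equiv1$ otherwise) together with $\sum_{i\in G}b_ia_i\le|G|$. You are right that Lemma~\ref{lem:power-score} cannot be applied directly to the signal term, and the explicit computation you use instead is exactly what the paper does.
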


\begin{proof}
Here $a_i=M_i|u_i|\in\{0,1\}$, and $a_i=1$ exactly when the corresponding component of $\mathbf g_{\mathcal Q}(\mathbf q)$ is nonzero. Substituting $p_i=\eta a_ib_i/s_G$ into \eqref{eq:power-block-majorizer} gives the first line. For $\gamma=0$, active coordinates have $b_i=1$ and $\sum_{i\in G}b_ia_i\le|G|$. For $\gamma>0$, if $m_G=\sum_{i\in G}a_i>0$, every active coordinate has $b_i=|G|/m_G\ge1$ and $\sum_i b_ia_i=|G|$; when $m_G=0$, the group does not move. Thus the weighted first-order signal is at least the unweighted feasible-gradient $\ell_1$ norm, while the curvature mass is at most $|G|$ in every group.
\end{proof}

\begin{proof}[Proof of Theorem~\ref{thm:qar-uniform-power-nonconvex}]
Since $a_{t,i}=M_{t,i}|u_{t,i}|\le U$, the condition
$\eta U b_{t,i}\le s_G$ implies
$\eta a_{t,i}b_{t,i}/s_G\le1$.  Hence the coordinatewise minimum in
Algorithm~\ref{alg:qar} is inactive.  Therefore
Proposition~\ref{prop:power-one-step} applies to the GD route and
Proposition~\ref{prop:power-sign-one-step} applies to the signGD route at
every step.
For GD, keep the weighted second line of \eqref{eq:power-one-step}, take total expectation, and sum over $t=0,\ldots,T-1$. Since $f(\mathbf q_T)\ge f^*_{\mathcal Q}$, this gives
\[
\frac1T\sum_{t=0}^{T-1}\E\!\left[
    \sum_i b_{t,i}\big(\mathbf g_{\mathcal Q}(\mathbf q_t)\big)_i^2
\right]
\le
\frac{2(f(\mathbf q_0)-f^*_{\mathcal Q})}{\eta T}
+\frac14\sum_G|G|L_G^2s_G^2.
\]
This proves the second inequality in \eqref{eq:qar-uniform-power-cumulative-gradient-bound}; the first follows by applying Lemma~\ref{lem:power-score} within each group before taking expectations. For signGD, the same summation applied to \eqref{eq:power-sign-one-step} proves \eqref{eq:qar-uniform-power-sign-cumulative-gradient-bound}.
\end{proof}

\subsection{Tightness of the finite-grid floors}
For arbitrary dimension $d$ and positive $L_i,s_i$, consider
\[
    \mathcal Q=\prod_{i=1}^d\{-s_i,0,s_i\},
    \qquad
    f(\mathbf x)=\frac12\sum_{i=1}^d
    L_i\left(x_i-\frac{s_i}{2}\right)^2.
\]
Its Hessian is $\operatorname{diag}(L_1,\ldots,L_d)$, so the coordinate-smoothness constants are exactly $L_i$.  Every $\mathbf q\in \prod_i\{0,s_i\}$ is a global minimizer over $\mathcal Q$.  Moreover, the descent-sign move toggles coordinate $i$ between $0$ and $s_i$, and hence
\[
    \mathbf g_{\mathcal Q}(\mathbf q)=\nabla f(\mathbf q),\qquad
    \|\mathbf g_{\mathcal Q}(\mathbf q)\|_2^2
    =\frac14\|\mathbf L\odot\mathbf s\|_2^2,\qquad
    \|\mathbf g_{\mathcal Q}(\mathbf q)\|_1
    =\frac12\|\mathbf L\odot\mathbf s\|_1.
\]
With exact gradients and $0<\eta\le 2/\max_iL_i$, the QAR-$0$ SGD route moves coordinate $i$ with probability $\eta L_i/2$, so a trajectory initialized in $\prod_i\{0,s_i\}$ remains there.  The initial finite-grid optimality gap and noise terms are zero, while the averaged feasible-gradient measure equals the SGD floor for every $T$.  The same trajectory under the QAR-$0$ signSGD route, with $0<\eta\le\min_i s_i$, attains its $\ell_1$ floor for every $T$.

The exact construction has tied neighboring minimizers, but the scaling does not rely on exact ties.  Replacing $s_i/2$ above by $s_i/2-\delta_i$, where $0<\delta_i<s_i/2$, makes $\mathbf 0$ the unique finite-grid minimizer.  On $\prod_i\{0,s_i\}$, for a sufficiently small positive stepsize, the exact-gradient SGD route has coordinate transition probabilities proportional to $L_i(s_i/2-\delta_i)$ and $L_i(s_i/2+\delta_i)$, its stationary feasible-gradient energy is
\[
    \E_{\pi}\|\mathbf g_{\mathcal Q}(\mathbf q)\|_2^2
    =\sum_{i=1}^d L_i^2\left(\frac{s_i^2}{4}-\delta_i^2\right),
\]
which approaches $\|\mathbf L\odot\mathbf s\|_2^2/4$ as $\boldsymbol\delta\to\mathbf0$.  In particular, in the tied construction with $L_i=L$ and $s_i=s$, the exact floor is $dL^2s^2/4$, even though $\nabla^2f=LI_d$ and the objective is globally $L$-smooth.  Thus the linear dimension dependence is unavoidable for the unnormalized feasible-gradient measure under the stated assumptions.

\section{Stochastic QAR Guarantees}
\label{app:stochastic-qar}

To analyze QAR under stochastic gradient noise, we follow \citet{bernstein2018signsgd} and adopt the standard assumption that the mini-batch gradient $\widehat{\mathbf{g}}(\mathbf{q})$ is unbiased with bounded coordinatewise variance.

\begin{assumption}[Stochastic gradient noise]
\label{ass:stochastic-noise}
At each endpoint $\mathbf q$, the mini-batch gradient is unbiased and has coordinate variance
\[
    \E[(\widehat g_i(\mathbf q)-g_i(\mathbf q))^2\mid\mathbf q]
    \le\sigma_i^2/n.
\]
Given $(\mathbf q_t,\widehat{\mathbf g}_t)$, Algorithm~\ref{alg:qar} samples the jump variables coordinatewise with probabilities $\mathbf p_t$, using fresh routing randomness.
\end{assumption}

\subsection{Baseline stochastic QAR bounds}

\begin{theorem}[Stochastic QAR finite-grid bounds]
\label{thm:stochastic-qar}
Suppose Assumptions~\ref{ass:smooth} and~\ref{ass:stochastic-noise} hold, and write $f^*_{\mathcal Q}=\min_{\mathbf q\in\mathcal Q}f(\mathbf q)$. Fix $\eta>0$ and an integer $T\ge1$. For each scale-sharing group $G$, let $s_i=s_G$ for $i\in G$, $L_G=\max_{i\in G}L_i$, and $\boldsymbol\sigma_G=(\sigma_i)_{i\in G}$. For the SGD route $\mathbf u_t=\widehat{\mathbf g}(\mathbf q_t)$, use $\mathcal A_\gamma$ with $\gamma=0$ and suppose $\eta|\widehat g_{t,i}|\le s_i$ almost surely. Then
\begin{equation}
\label{eq:qat-sgd-cumulative-gradient-bound}
\frac1T\sum_{t=0}^{T-1}\E\|\mathbf g_{\mathcal Q}(\mathbf q_t)\|_2^2
\le
\frac{2(f(\mathbf q_0)-f^*_{\mathcal Q})}{\eta T}
+\frac14\|\mathbf L\odot\mathbf s\|_2^2
+\frac{\langle\mathbf L\odot\mathbf s,\boldsymbol\sigma\rangle}{\sqrt n}
+\frac{2\|\boldsymbol\sigma\|_2^2}{n}.
\end{equation}
For the signSGD route $\mathbf u_t=\sign(\widehat{\mathbf g}(\mathbf q_t))$, use $\mathcal A_\gamma$ for any $\gamma\ge0$, with $\eta\le\min_Gs_G$ when $\gamma=0$ and $\eta\le\min_G(s_G/|G|)$ when $\gamma>0$. Then
\begin{equation}
\label{eq:qat-signsgd-cumulative-gradient-bound}
\frac1T\sum_{t=0}^{T-1}\E\|\mathbf g_{\mathcal Q}(\mathbf q_t)\|_1
\le
\frac{f(\mathbf q_0)-f^*_{\mathcal Q}}{\eta T}
+\begin{cases}
\displaystyle
\frac12\|\mathbf L\odot\mathbf s\|_1
+\frac{2\|\boldsymbol\sigma\|_1}{\sqrt n},
& \gamma=0,\\[6pt]
\displaystyle
\frac12\sum_G|G|L_Gs_G
+\frac{2}{\sqrt n}\sum_G|G|\|\boldsymbol\sigma_G\|_1,
& \gamma>0,
\end{cases}
\end{equation}
\end{theorem}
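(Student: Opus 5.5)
The approach mirrors the deterministic proof of Theorem~\ref{thm:qar-uniform-power-nonconvex}. We derive a one-step conditional bound from the block majorizer \eqref{eq:power-block-majorizer}, take total expectation, telescope, and use $f(\mathbf q_T)\ge f^*_{\mathcal Q}$. The new ingredient is to separate the expectation over routing randomness from the expectation over mini-batch noise. Conditional on $(\mathbf q_t,\widehat{\mathbf g}_t)$, the stepsize assumptions make the cap in Algorithm~\ref{alg:qar} inactive. For SGD we have $\eta|\widehat g_{t,i}|\le s_i$. For signSGD with $\gamma>0$ we have $b_i\le |G|$ by the normalization $\sum_{i\in G} b_i=|G|$, and $\eta\le s_G/|G|$. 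Each coordinate then jumps by $-s_G z_i$ with probability $p_i=\eta M_i|u_i|b_i/s_G$. The conditional expected change is therefore at most $-\eta\sum_i M_i g_i\,\sign(u_i)|u_i| b_i/1+\frac{\eta}{2}\sum_G L_G s_G\sum_{i\in G} b_iM_i|u_i|$, where the first-order term is $-\sum_i p_i s_G\sign(u_i) g_i$ and uses the true gradient $g_i$.

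\textbf{SGD route ($\gamma=0$).} The conditional bound reads $-\eta\sum_i M_i g_i\widehat g_i+\frac\eta2\sum_i L_is_iM_i|\widehat g_i|$. The main obstacle is that the mask $M_i=\one\{k_i-\sign(\widehat g_i)\in\mathcal K_i\}$ depends on the noisy sign, so $\E[M_ig_i\widehat g_i]$ is not simply $g_i^2$ times a feasibility indicator.

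I would handle this coordinatewise. Write $\widehat g_i=g_i+\xi_i$. Let $F_i=\one\{k_i-\sign(g_i)\in\mathcal K_i\}$, so that $(\mathbf g_{\mathcal Q})_i=g_iF_i$. We need $\E[M_ig_i\widehat g_i]\ge F_ig_i^2-c\,\E|g_i\xi_i|$-type errors. On the event that $\sign(\widehat g_i)=\sign(g_i)$ the mask equals $F_i$. When the sign flips we have $|\xi_i|\ge|g_i|$, and the bad term is bounded by $g_i^2\one\{\text{flip}\}\le \xi_i^2$. Also, when the sign flips and the reverse move is feasible, $g_i\widehat g_i\le0$ but $|g_i\widehat g_i|\le |g_i||\xi_i|\le\xi_i^2$. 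So I expect $-\E[M_ig_i\widehat g_i]\le -F_ig_i^2+2\E\xi_i^2$, with unbiasedness killing the cross term on the no-flip branch up to another $\xi_i^2$. Summing gives the $2\|\boldsymbol\sigma\|_2^2/n$ term.

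For the curvature term, $\E|\widehat g_i|\le|g_iF_i|+|g_i|(1-F_i)\cdots$. More carefully, $\E M_i|\widehat g_i|\le |(\mathbf g_{\mathcal Q})_i|+\E|\xi_i|+(\text{flip part})\le|(\mathbf g_{\mathcal Q})_i|+\sigma_i/\sqrt n$ by Jensen, where I would absorb the infeasible-but-flipped case into $|\xi_i|$. Then $-\eta F_ig_i^2+\frac\eta2 L_is_i|(\mathbf g_{\mathcal Q})_i|\le-\frac\eta2(\mathbf g_{\mathcal Q})_i^2+\frac\eta8L_i^2s_i^2$, exactly as in Proposition~\ref{prop:power-one-step}. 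The remaining term is $\frac\eta2 L_is_i\sigma_i/\sqrt n$. Multiplying by $2/\eta$ after telescoping yields \eqref{eq:qat-sgd-cumulative-gradient-bound}, with constants $\frac14\|\mathbf L\odot\mathbf s\|_2^2$, $\langle\mathbf L\odot\mathbf s,\boldsymbol\sigma\rangle/\sqrt n$ and $2\|\boldsymbol\sigma\|_2^2/n$. If the constants come out slightly off, I would adjust the flip bookkeeping, e.g.\ use $\Pr(\text{flip})\le \E\xi_i^2/g_i^2$.

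\textbf{signSGD route.} Now $M_i|u_i|\in\{0,1\}$ and the first-order term is $-\eta\sum_i b_iM_i|g_i|\sign(g_i\widehat g_i)$. The curvature term is at most $\frac\eta2\sum_G L_Gs_G\sum_{i\in G}b_iM_i$, which is at most $\frac\eta2\sum_GL_Gs_G|G|$ by Lemma~\ref{lem:power-score}. For $\gamma=0$ it is at most $\frac\eta2\|\mathbf L\odot\mathbf s\|_1$.

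For the signal term I would follow the standard \citet{bernstein2018signsgd} argument. We have $|g_i|\sign(g_i\widehat g_i)=|g_i|-2|g_i|\one\{\text{flip}\}$ and $|g_i|\Pr(\text{flip})\le\E|\xi_i|\le\sigma_i/\sqrt n$ (Markov). With $\gamma=0$, a flip toward an infeasible direction only removes motion, so each coordinate contributes at least $|(\mathbf g_{\mathcal Q})_i|-2\sigma_i/\sqrt n$.

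For $\gamma>0$, the weights satisfy $b_i\le|G|$, and the random $b_i$ is correlated with flips. I would lower bound the signal directly. The no-flip feasible coordinates have $b_i\ge1$ whenever active, giving at least $|(\mathbf g_{\mathcal Q})_i|$. Each flipped coordinate costs at most $|G|\cdot|g_i|$ and contributes at most $2|G|\sigma_i/\sqrt n$ in expectation. This produces the $\frac{2}{\sqrt n}\sum_G|G|\|\boldsymbol\sigma_G\|_1$ term. The subtle point is that a flip can make $m_G$ change and thus drop some $b_i$ below $1$. I would avoid this by noting that every active coordinate has $b_i=|G|/m_G\ge1$ regardless of which ones flipped, exactly as in Proposition~\ref{prop:power-sign-one-step}. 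Telescoping and dividing by $\eta T$ then gives \eqref{eq:qat-signsgd-cumulative-gradient-bound}.
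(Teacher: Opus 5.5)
Your overall plan matches the paper's proof. You condition on $(\mathbf q_t,\widehat{\mathbf g}_t)$, check that the cap is inactive, take the Bernoulli expectation in the smoothness bound, and split coordinatewise on whether the true descent move is feasible and whether the sign is wrong. You then use Young's inequality for SGD, or Markov together with $b_i\in[1,|G|]$ on active coordinates for signSGD, and telescope. Your signSGD argument is correct: it is the paper's inequality $b_ig_iz_iM_i\ge|(\mathbf g_{\mathcal Q})_i|-2w_G|g_i|\one\{z_i\ne\sign(g_i)\}$ with $w_G\in\{1,|G|\}$, stated in words. One small fix there: for $\gamma=0$, apply Assumption~\ref{ass:smooth} with the coordinate constants $L_i$, not the block constants $L_G$ from your general display. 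Otherwise you get $\sum_G|G|L_Gs_G$ rather than $\frac12\|\mathbf L\odot\mathbf s\|_1$.

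The SGD route, as written, does not deliver the stated constant. Your estimate $-\E[M_ig_i\widehat g_i]\le -F_ig_i^2+2\E\xi_i^2$ puts $2\eta\|\boldsymbol\sigma\|_2^2/n$ into the one-step bound. Multiplying by $2/\eta$ then gives $4\|\boldsymbol\sigma\|_2^2/n$, not $2\|\boldsymbol\sigma\|_2^2/n$. Your other two constants come out right, so the error is in the claim that summing gives the stated term. The repair you suggest, $\Pr(\text{flip})\le\E\xi_i^2/g_i^2$, is the same estimate and does not change the constant.

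The missing observation is that there is no loss at all when $F_i=1$. In that case the mask can delete only moves with $\sign(\widehat g_i)=-\sign(g_i)$, i.e.\ terms with $g_i\widehat g_i<0$. So $M_ig_i\widehat g_i\ge g_i\widehat g_i$ pointwise, and unbiasedness gives $\E[M_ig_i\widehat g_i\mid\mathbf q]\ge g_i^2$. In your split, the flip part of $g_i\widehat g_i$ is nonpositive, so the no-flip branch already has expectation at least $F_ig_i^2$. Charging it an extra $\xi_i^2$ double counts.

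A loss occurs only when $F_i=0$ and $g_i\ne0$. There $M_ig_i\widehat g_i\ne0$ requires a sign flip, on which $|g_i|\le|\xi_i|$ and $|\widehat g_i|\le|\xi_i|$, so $M_ig_i\widehat g_i\ge-\xi_i^2$. Hence $\E[M_ig_i\widehat g_i\mid\mathbf q]\ge(\mathbf g_{\mathcal Q}(\mathbf q))_i^2-\sigma_i^2/n$.

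For the curvature term, use the pointwise bound $M_i|\widehat g_i|\le|(\mathbf g_{\mathcal Q}(\mathbf q))_i|+|\xi_i|$. It follows from the same two cases, so there is no separate flip part to absorb; adding one literally would double the $\langle\mathbf L\odot\mathbf s,\boldsymbol\sigma\rangle$ term. With these two bounds and Young's inequality you obtain exactly \eqref{eq:qat-sgd-cumulative-gradient-bound}.
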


\begin{proof}
Fix $\mathbf q_t$, write $\mathbf g_t=\nabla f(\mathbf q_t)$, $\widehat{\mathbf g}_t=\mathbf g_t+\boldsymbol{\xi}_t$, and $\mathbf h_t=\mathbf g_{\mathcal Q}(\mathbf q_t)$.  Also write $\mathbf z_t=\sign(\widehat{\mathbf g}_t)$.  The coordinate mask $M_{t,i}$ keeps only feasible one-cell moves, so the endpoint always remains in $\mathcal Q$.

\vspace{0.5em}
\noindent\textbf{SGD route.} For $\mathbf u_t=\widehat{\mathbf g}_t$, conditioning on $(\mathbf q_t,\widehat{\mathbf g}_t)$ and applying Assumption~\ref{ass:smooth} gives
\[
\begin{aligned}
    \E_{\mathbf J}[f(\mathbf q_{t+1})-f(\mathbf q_t)\mid \mathbf q_t,\widehat{\mathbf g}_t]
    &\le
    -\eta\sum_i g_{t,i}\widehat g_{t,i}M_{t,i}
    +
    \frac{\eta}{2}\sum_iL_i s_i|\widehat g_{t,i}|M_{t,i}.
\end{aligned}
\]
We first lower bound the first-order term.  For a coordinate with $h_{t,i}\ne0$, the true descent sign is feasible.  If the coordinate is interior, $M_{t,i}=1$ whenever $\widehat g_{t,i}\ne0$, and hence $g_{t,i}\widehat g_{t,i}M_{t,i}=g_{t,i}\widehat g_{t,i}$.  If the coordinate is at a boundary, the mask removes only stochastic signs for which $g_{t,i}\widehat g_{t,i}\le0$.  Therefore, in both cases,
\[
    \E[g_{t,i}\widehat g_{t,i}M_{t,i}\mid \mathbf q_t]
    \ge
    g_{t,i}^2
    =
    h_{t,i}^2.
\]
For a coordinate with $h_{t,i}=0$, either $g_{t,i}=0$, in which case the term is zero, or the true descent sign is infeasible at a boundary.  In the latter case $g_{t,i}\widehat g_{t,i}M_{t,i}$ is nonzero only after a stochastic sign flip.  On that event, $|g_{t,i}|\le|\xi_{t,i}|$ and $|\widehat g_{t,i}|\le|\xi_{t,i}|$, so
\[
    g_{t,i}\widehat g_{t,i}M_{t,i}
    \ge
    -\xi_{t,i}^2 .
\]
Using the variance bound, for every coordinate,
\[
    \E[g_{t,i}\widehat g_{t,i}M_{t,i}\mid \mathbf q_t]
    \ge
    h_{t,i}^2-\frac{\sigma_i^2}{n}.
\]
The curvature term satisfies
\[
    |\widehat g_{t,i}|M_{t,i}
    \le
    |h_{t,i}|+|\xi_{t,i}|.
\]
Indeed, if $h_{t,i}\ne0$, this follows from $|\widehat g_{t,i}|\le |g_{t,i}|+|\xi_{t,i}|$. If $h_{t,i}=0$ and $g_{t,i}\ne0$, the product $|\widehat g_{t,i}|M_{t,i}$ can be nonzero only on a sign flip, where $|\widehat g_{t,i}|\le|\xi_{t,i}|$.  Hence
\[
    \E[|\widehat g_{t,i}|M_{t,i}\mid \mathbf q_t]
    \le
    |h_{t,i}|+\frac{\sigma_i}{\sqrt n}.
\]
Taking expectation over $\widehat{\mathbf g}_t$ in the one-step bound gives
\[
\begin{aligned}
    \E[f(\mathbf q_{t+1})-f(\mathbf q_t)\mid \mathbf q_t]
    &\le
    -\eta\|\mathbf h_t\|_2^2
    +
    \frac{\eta\|\boldsymbol{\sigma}\|_2^2}{n}
    +
    \frac{\eta}{2}\big\langle\mathbf L\odot\mathbf s,|\mathbf h_t|\big\rangle
    +
    \frac{\eta}{2\sqrt n}
    \big\langle\mathbf L\odot\mathbf s,\boldsymbol{\sigma}\big\rangle .
\end{aligned}
\]
By Young's inequality,
\[
    \frac12\big\langle\mathbf L\odot\mathbf s,|\mathbf h_t|\big\rangle
    \le
    \frac12\|\mathbf h_t\|_2^2
    +
    \frac18\|\mathbf L\odot\mathbf s\|_2^2 .
\]
Thus
\[
    \E[f(\mathbf q_{t+1})-f(\mathbf q_t)]
    \le
    -\frac{\eta}{2}\E\|\mathbf h_t\|_2^2
    +
    \frac{\eta}{8}\|\mathbf L\odot\mathbf s\|_2^2
    +
    \frac{\eta}{2\sqrt n}
    \big\langle\mathbf L\odot\mathbf s,\boldsymbol{\sigma}\big\rangle
    +
    \frac{\eta\|\boldsymbol{\sigma}\|_2^2}{n}.
\]
Summing over $t=0,\ldots,T-1$, using $f(\mathbf q_T)\ge f^*_{\mathcal Q}$, and dividing by $\eta T/2$ proves \eqref{eq:qat-sgd-cumulative-gradient-bound}.

\vspace{0.5em}
\noindent\textbf{signSGD route.} Set $a_{t,i}=M_{t,i}|z_{t,i}|\in\{0,1\}$. For $\gamma=0$, $b_{t,i}=1$. For $\gamma>0$, if $m_{t,G}=\sum_{i\in G}a_{t,i}>0$, then the binary scores give $b_{t,i}=|G|/m_{t,G}$ on active coordinates; if $m_{t,G}=0$, the group does not move. Hence every active score lies in $[1,w_G]$, where $w_G=1$ for $\gamma=0$ and $w_G=|G|$ for $\gamma>0$, and
\[
    \sum_{i\in G}b_{t,i}a_{t,i}\le |G|.
\]
The stated stepsize conditions therefore keep all probabilities uncapped. Conditioning on $(\mathbf q_t,\widehat{\mathbf g}_t)$ and applying Assumption~\ref{ass:smooth} gives
\[
\begin{aligned}
    \E_{\mathbf J}[f(\mathbf q_{t+1})-f(\mathbf q_t)
        \mid \mathbf q_t,\widehat{\mathbf g}_t]
    \le
    -\eta\sum_i b_{t,i}g_{t,i}z_{t,i}M_{t,i}
    +\frac{\eta}{2}\sum_Gs_G
        \sum_{i\in G}L_i b_{t,i}a_{t,i}.
\end{aligned}
\]
For every $i\in G$, a case split over a correct sign, an incorrect sign, and $z_{t,i}=0$ gives
\[
    b_{t,i}g_{t,i}z_{t,i}M_{t,i}
    \ge |h_{t,i}|-2w_G|g_{t,i}|
    \one\{z_{t,i}\ne\sign(g_{t,i})\}.
\]
This also covers a boundary where the true descent direction is infeasible: then $h_{t,i}=0$, and a harmful inward move requires a sign error.  For $g_{t,i}\ne0$,
\[
    \Pr(z_{t,i}\ne\sign(g_{t,i})\mid \mathbf q_t)
    \le
    \Pr(|\widehat g_{t,i}-g_{t,i}|\ge |g_{t,i}|\mid \mathbf q_t)
    \le
    \frac{\sigma_i}{\sqrt n\,|g_{t,i}|},
\]
where the last step uses Markov's inequality on $|\widehat g_{t,i}-g_{t,i}|$ and Cauchy--Schwarz with the coordinate variance bound. Coordinates with $g_{t,i}=0$ contribute zero, so
\[
    \E\left[
        \sum_i b_{t,i}g_{t,i}z_{t,i}M_{t,i}
        \,\middle|\,\mathbf q_t
    \right]
    \ge
    \|\mathbf g_{\mathcal Q}(\mathbf q_t)\|_1
    -
    \frac{2}{\sqrt n}\sum_Gw_G\|\boldsymbol\sigma_G\|_1.
\]
For the curvature term, $\gamma=0$ retains the sharper bound $\sum_iL_is_i$, while $\gamma>0$ is bounded by $\sum_G|G|L_Gs_G$. Taking expectation over the stochastic gradient therefore gives
\[
    \E[f(\mathbf q_{t+1})-f(\mathbf q_t)\mid \mathbf q_t]
    \le
    -\eta\|\mathbf h_t\|_1
    +\begin{cases}
    \displaystyle
    \frac{\eta}{2}\|\mathbf L\odot\mathbf s\|_1
    +\frac{2\eta\|\boldsymbol\sigma\|_1}{\sqrt n},
    & \gamma=0,\\[6pt]
    \displaystyle
    \frac{\eta}{2}\sum_G|G|L_Gs_G
    +\frac{2\eta}{\sqrt n}\sum_G|G|\|\boldsymbol\sigma_G\|_1,
    & \gamma>0.
    \end{cases}
\]
Summing over $t=0,\ldots,T-1$, using $f(\mathbf q_T)\ge f^*_{\mathcal Q}$, and dividing by $\eta T$ proves \eqref{eq:qat-signsgd-cumulative-gradient-bound}.
\end{proof}

\subsection{Noisy power-routing guarantee}

Consider the stochastic full-magnitude route $\mathbf u=\widehat{\mathbf g}(\mathbf q)$.  Write $\widehat{\mathbf g}=\mathbf g+\boldsymbol\xi$, and construct the feasibility mask and power score from the same sampled gradient.  For $i\in G$, let
\[
    \widehat a_i=M_i|\widehat g_i|,
    \qquad
    b_i=b_{\gamma,i}(\widehat{\mathbf a}).
\]
We restrict this result to the uncapped regime $\eta\widehat a_i b_i\le s_G$.  If this condition fails, the cap can truncate the largest power scores and the deterministic comparison in Lemma~\ref{lem:power-score} need not be preserved.

The effect of amplification can be summarized by the following net margin:
\begin{equation}
\label{eq:noisy-power-net-margin}
\begin{aligned}
    \Delta_\gamma(\mathbf q)
    :={}&
    \E\!\left[
       \sum_G\sum_{i\in G}(b_i-1)
       \left(\widehat a_i^2-\frac12L_Gs_G\widehat a_i\right)
       \,\middle|\,\mathbf q\right]\\
    &-\frac1{\sqrt n}\sum_G
       \left(\E\!\left[
       \sum_{i\in G}(b_i-1)^2\widehat a_i^2
       \,\middle|\,\mathbf q\right]\right)^{1/2}
       \|\boldsymbol\sigma_G\|_2 .
\end{aligned}
\end{equation}
The first term is the additional first-order signal captured by power routing minus its curvature cost; the second accounts for selecting the routing scores from noisy gradients.  At $\gamma=0$, $b_i=1$ and $\Delta_0(\mathbf q)=0$.

\begin{theorem}[Noisy QAR-$\gamma$ trade-off]
\label{thm:noisy-qar-gamma-tradeoff}
Suppose Assumptions~\ref{ass:smooth} and~\ref{ass:stochastic-noise} hold.  Run Algorithm~\ref{alg:qar} with $\mathbf u_t=\widehat{\mathbf g}(\mathbf q_t)$ and $\mathcal A_\gamma$, $\gamma\ge0$, and assume $\eta\widehat a_{t,i}b_{t,i}\le s_G$ almost surely for every $t<T$, every group $G$, and every $i\in G$.  Let $f^*_{\mathcal Q}=\min_{\mathbf q\in\mathcal Q}f(\mathbf q)$.  Then
\begin{equation}
\label{eq:noisy-power-cumulative-tradeoff}
\begin{aligned}
    \frac1T\sum_{t=0}^{T-1}
       \E\|\mathbf g_{\mathcal Q}(\mathbf q_t)\|_2^2
    \le{}&
       \frac{2(f(\mathbf q_0)-f^*_{\mathcal Q})}{\eta T} +\underbrace{\frac14\sum_G|G|L_G^2s_G^2}
          _{\text{finite-grid floor}}+\underbrace{
          \frac1{\sqrt n}\sum_GL_Gs_G\|\boldsymbol\sigma_G\|_1
          +\frac{2\|\boldsymbol\sigma\|_2^2}{n}}
          _{\text{stochastic-noise floor}} \\
          &-\frac2T\sum_{t=0}^{T-1}
          \E\Delta_\gamma(\mathbf q_t).
\end{aligned}
\end{equation}
\end{theorem}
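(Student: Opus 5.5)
The plan is to prove a one-step bound conditioned on $(\mathbf q_t,\widehat{\mathbf g}_t)$, telescope it, and isolate the amplification effect by splitting each score as $b_i=1+(b_i-1)$. The first piece reproduces the $\gamma=0$ SGD analysis of Theorem~\ref{thm:stochastic-qar}; the second piece should produce exactly $-\eta\Delta_\gamma$.

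\emph{One-step bound.} Fix $\mathbf q=\mathbf q_t$ and write $\widehat{\mathbf g}=\mathbf g+\boldsymbol\xi$ and $\mathbf h=\mathbf g_{\mathcal Q}(\mathbf q)$. By the almost-sure uncapped assumption, the jump probabilities are $p_i=\eta\widehat a_ib_i/s_G$. A jump in coordinate $i$ has displacement $-s_G\sign(\widehat g_i)$. Substituting into the block majorizer \eqref{eq:power-block-majorizer} and averaging over $\mathbf J$ gives
\[
\E_{\mathbf J}[f(\mathbf q^+)-f(\mathbf q)\mid\mathbf q,\widehat{\mathbf g}]\le-\eta\sum_G\sum_{i\in G}b_i\,g_i\widehat g_iM_i+\frac\eta2\sum_GL_Gs_G\sum_{i\in G}b_i\widehat a_i .
\]
Now split $b_i=1+(b_i-1)$.

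\emph{The $b_i=1$ part.} This part is literally the SGD-route expression from the proof of Theorem~\ref{thm:stochastic-qar}, except that $L_i$ is replaced by the larger $L_G$. I reuse its two estimates:
\[
\E[g_i\widehat g_iM_i\mid\mathbf q]\ge h_i^2-\sigma_i^2/n,\qquad \E[\widehat a_i\mid\mathbf q]\le|h_i|+\sigma_i/\sqrt n .
\]
Applying Young's inequality groupwise, $\tfrac12L_Gs_G|h_i|\le\tfrac12h_i^2+\tfrac18L_G^2s_G^2$, yields the bound
\[
-\tfrac\eta2\|\mathbf h\|_2^2+\tfrac\eta8\sum_G|G|L_G^2s_G^2+\tfrac{\eta}{2\sqrt n}\sum_GL_Gs_G\|\boldsymbol\sigma_G\|_1+\tfrac{\eta}{n}\|\boldsymbol\sigma\|_2^2 .
\]

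\emph{The $(b_i-1)$ part.} The key algebraic identity is $g_i\widehat g_iM_i=\widehat g_i^2M_i-\xi_i\widehat g_iM_i=\widehat a_i^2-\xi_i\widehat g_iM_i$. With it, the amplification contribution becomes
\[
-\eta\sum_G\sum_{i\in G}(b_i-1)\Bigl(\widehat a_i^2-\tfrac12L_Gs_G\widehat a_i\Bigr)+\eta\sum_G\sum_{i\in G}(b_i-1)\,\xi_i\widehat g_iM_i .
\]
The first term is exactly the signal-minus-curvature part of $\Delta_\gamma$. The second term is the main obstacle. The scores $b_i$ are computed from the same noisy gradient, so $(b_i-1)\widehat g_iM_i$ is correlated with $\xi_i$ and the term cannot be dropped as mean-zero. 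I would bound it crudely by $|(b_i-1)|\,\widehat a_i|\xi_i|$ and apply Cauchy--Schwarz within each group, first over coordinates and then over the expectation:
\[
\E\sum_{i\in G}|b_i-1|\,\widehat a_i|\xi_i|\le\Bigl(\E\sum_{i\in G}(b_i-1)^2\widehat a_i^2\Bigr)^{1/2}\Bigl(\E\|\boldsymbol\xi_G\|_2^2\Bigr)^{1/2},
\]
and then use $\E\|\boldsymbol\xi_G\|_2^2\le\|\boldsymbol\sigma_G\|_2^2/n$. This is precisely the penalty term in \eqref{eq:noisy-power-net-margin}, so the $(b_i-1)$ part is at most $-\eta\Delta_\gamma(\mathbf q)$.

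\emph{Telescoping.} Add the two parts to get
\[
\E[f(\mathbf q_{t+1})-f(\mathbf q_t)\mid\mathbf q_t]\le-\tfrac\eta2\|\mathbf h_t\|_2^2+\tfrac\eta8\sum_G|G|L_G^2s_G^2+\tfrac{\eta}{2\sqrt n}\sum_GL_Gs_G\|\boldsymbol\sigma_G\|_1+\tfrac{\eta}{n}\|\boldsymbol\sigma\|_2^2-\eta\Delta_\gamma(\mathbf q_t).
\]
Take total expectation and sum over $t=0,\ldots,T-1$. Use $f(\mathbf q_T)\ge f^*_{\mathcal Q}$ and multiply through by $2/(\eta T)$. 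This gives \eqref{eq:noisy-power-cumulative-tradeoff}, with the $-\tfrac2T\sum_t\E\Delta_\gamma(\mathbf q_t)$ term coming directly from the last summand.
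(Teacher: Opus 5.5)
Your proposal is correct and follows essentially the same route as the paper. You split $b_i=1+(b_i-1)$, handle the $b_i=1$ part with the SGD-route estimates from Theorem~\ref{thm:stochastic-qar} plus Young's inequality, and control the selection-noise cross term $|b_i-1|\,\widehat a_i|\xi_i|$ by Cauchy--Schwarz; your identity $g_i\widehat g_iM_i=\widehat a_i^2-\xi_i\widehat g_iM_i$ is the paper's bound $|v_i-\widehat a_i|\le|\xi_i|$ with $v_i=M_ig_i\sign(\widehat g_i)$, written differently.
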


Because $\Delta_0=0$, the first three terms on the right-hand side form the corresponding groupwise QAR-$0$ certificate.  Power amplification gives a strictly sharper certificate whenever
\[
    \sum_{t=0}^{T-1}\E\Delta_\gamma(\mathbf q_t)>0.
\]
When this quantity is nonpositive, the comparison does not certify an advantage from amplification.

\begin{proof}
Condition on $(\mathbf q,\widehat{\mathbf g})$, and let $v_i=M_i g_i\sign(\widehat g_i)$.  Let $\mathcal U(\mathbf q,\widehat{\mathbf g})$ denote the one-step upper bound obtained with $b_i=1$.  The block majorizer \eqref{eq:power-block-majorizer}, Bernoulli expectation, and the uncapped probability $p_i=\eta\widehat a_i b_i/s_G$ give
\begin{align}
    \E_{\mathbf J}[f(\mathbf q^+)-f(\mathbf q)
       \mid\mathbf q,\widehat{\mathbf g}]
    \le{}&\mathcal U(\mathbf q,\widehat{\mathbf g})
       -\eta\sum_G\sum_{i\in G}(b_i-1)
          \left(\widehat a_i^2-\frac12L_Gs_G\widehat a_i\right)
          \notag\\
       &+\eta\sum_G\sum_{i\in G}
          |b_i-1|\widehat a_i|\xi_i|.
\label{eq:noisy-power-conditional-comparison}
\end{align}
Indeed,
\[
    |v_i-\widehat a_i|
    =M_i|\sign(\widehat g_i)|\,|g_i-\widehat g_i|
    \le|\xi_i|.
\]
Conditional Cauchy--Schwarz and Assumption~\ref{ass:stochastic-noise} imply
\[
\begin{aligned}
    \E\!\left[
       \sum_G\sum_{i\in G}|b_i-1|\widehat a_i|\xi_i|
       \,\middle|\,\mathbf q\right]
    \le
    \frac1{\sqrt n}\sum_G
       \left(\E\!\left[
       \sum_{i\in G}(b_i-1)^2\widehat a_i^2
       \,\middle|\,\mathbf q\right]\right)^{1/2}
       \|\boldsymbol\sigma_G\|_2.
\end{aligned}
\]
The coordinate estimates established in the SGD part of Theorem~\ref{thm:stochastic-qar} give
\[
\begin{aligned}
    \E[g_i\widehat g_iM_i\mid\mathbf q]
    &\ge \big((\mathbf g_{\mathcal Q}(\mathbf q))_i\big)^2
       -\frac{\sigma_i^2}{n},\\
    \E[M_i|\widehat g_i|\mid\mathbf q]
    &\le |(\mathbf g_{\mathcal Q}(\mathbf q))_i|
       +\frac{\sigma_i}{\sqrt n}.
\end{aligned}
\]
Applying these estimates to $\mathcal U$, followed by Young's inequality, yields
\[
\begin{aligned}
    \E[\mathcal U(\mathbf q,\widehat{\mathbf g})\mid\mathbf q]
    \le{}&
       -\frac\eta2\|\mathbf g_{\mathcal Q}(\mathbf q)\|_2^2
       +\frac\eta8\sum_G|G|L_G^2s_G^2\\
       &+\frac\eta{2\sqrt n}
          \sum_GL_Gs_G\|\boldsymbol\sigma_G\|_1
       +\frac{\eta\|\boldsymbol\sigma\|_2^2}{n}.
\end{aligned}
\]
Taking conditional expectation in \eqref{eq:noisy-power-conditional-comparison} and using \eqref{eq:noisy-power-net-margin} therefore gives
\[
\begin{aligned}
    \E[f(\mathbf q^+)-f(\mathbf q)\mid\mathbf q]
    \le{}&
       -\frac\eta2\|\mathbf g_{\mathcal Q}(\mathbf q)\|_2^2
       +\frac\eta8\sum_G|G|L_G^2s_G^2\\
       &+\frac\eta{2\sqrt n}
          \sum_GL_Gs_G\|\boldsymbol\sigma_G\|_1
       +\frac{\eta\|\boldsymbol\sigma\|_2^2}{n}
       -\eta\Delta_\gamma(\mathbf q).
\end{aligned}
\]
Summing over $t<T$, using $f(\mathbf q_T)\ge f^*_{\mathcal Q}$, and dividing by $\eta T/2$ proves \eqref{eq:noisy-power-cumulative-tradeoff}.
\end{proof}

\subsection{Effect of the power exponent}

\begin{lemma}[Effect of the power exponent]
\label{lem:power-exponent-tradeoff}
Fix a nonconstant group of nonnegative magnitudes $a_i$. For $\gamma>0$,
\[
    \frac{\sum_{i\in G}(b_{\gamma,i}-1)a_i^2}
         {\sum_{i\in G}(b_{\gamma,i}-1)a_i}
\]
has a positive denominator and is nondecreasing in $\gamma$; it is strictly increasing when the group has at least three distinct magnitudes. Moreover, $\max_i b_{\gamma,i}$ and $\sum_i(b_{\gamma,i}-1)^2a_i^2$ are nondecreasing in $\gamma$.
\end{lemma}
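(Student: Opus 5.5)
The plan is to normalize the scores and rewrite every weighted sum as a pairwise sum with nonnegative weights. Write $n=|G|$ and $m_\gamma=n^{-1}\sum_{j\in G}a_j^\gamma>0$ (the group is nonconstant, so some $a_j>0$), so that $b_{\gamma,i}=a_i^\gamma/m_\gamma$. For any exponent $r\ge0$, expanding $\sum_i b_{\gamma,i}a_i^r-\sum_i a_i^r$ over ordered pairs gives the symmetrization
\[
    \sum_{i\in G}(b_{\gamma,i}-1)a_i^r
    =\frac{1}{2n\,m_\gamma}\sum_{i,j\in G}\bigl(a_i^\gamma-a_j^\gamma\bigr)\bigl(a_i^r-a_j^r\bigr).
\]
This is the identity behind the Chebyshev step in Lemma~\ref{lem:power-score}. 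With $r=1$ every summand is nonnegative, and it is positive for any pair with $a_i\ne a_j$. So the denominator is positive whenever the group is nonconstant and $\gamma>0$.

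For the ratio, take $r=2$ and $r=1$ and cancel the common prefactor. Using $a_i^2-a_j^2=(a_i+a_j)(a_i-a_j)$, this gives
\[
    R(\gamma)=\frac{\sum_{i,j}w_{ij}(\gamma)(a_i+a_j)}{\sum_{i,j}w_{ij}(\gamma)},
    \qquad w_{ij}(\gamma)=(a_i^\gamma-a_j^\gamma)(a_i-a_j)\ge0 .
\]
Thus $R(\gamma)$ is an average of the pair sums $a_i+a_j$ under the pair distribution proportional to $w_{ij}(\gamma)$. Monotonicity in $\gamma$ then amounts to showing that this pair distribution shifts toward pairs with larger $a_i+a_j$ as $\gamma$ grows. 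I would differentiate in $\gamma$:
\[
    R'(\gamma)=\operatorname{Cov}_{w(\gamma)}\bigl(a_i+a_j,\;\ell_{ij}(\gamma)\bigr),
    \qquad
    \ell_{ij}(\gamma)=\partial_\gamma\log\bigl|a_i^\gamma-a_j^\gamma\bigr|
    =\frac{a_i^\gamma\log a_i-a_j^\gamma\log a_j}{a_i^\gamma-a_j^\gamma}.
\]
If one of $a_i,a_j$ is zero, $\ell_{ij}$ is simply the log of the other. The natural approach is to show that $\ell_{ij}$ is comonotone with $a_i+a_j$. Note that $\ell_{ij}$ is the mean of $\log a$ under the tilted measure with density proportional to $a^\gamma$ on the log-interval between $a_j$ and $a_i$, so it increases in each endpoint separately. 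That is immediate for pairs ordered componentwise. It is not immediate for crossed pairs, where one endpoint goes up and the other goes down.

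This crossed-pair case is the step I expect to be the main obstacle. If direct comonotonicity fails, I would fall back to a second-order Chebyshev/FKG argument. The plan there is to write the covariance as a double sum over pairs of pairs and pair each crossed configuration with its swap, aiming to show the terms group into nonnegative contributions. The same double-sum expression should also yield strictness: with at least three distinct magnitudes, the pair sums are not all equal on the support of $w$, so the covariance is strictly positive. With only two distinct values, every pair carrying positive weight has the same sum $a_i+a_j$, so $R$ is constant, which matches the "nondecreasing" claim in that case.

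The remaining two monotonicity claims are easier. For the maximum score, $\max_i b_{\gamma,i}=\bigl(n^{-1}\sum_j (a_j/a_{\max})^\gamma\bigr)^{-1}$. Each ratio $a_j/a_{\max}$ lies in $[0,1]$, so each term $(a_j/a_{\max})^\gamma$ is nonincreasing in $\gamma$, and the maximum score is nondecreasing. For $S(\gamma)=\sum_i(b_{\gamma,i}-1)^2a_i^2$, I would again differentiate. Using $\partial_\gamma b_i=b_i(\log a_i-\bar\ell_\gamma)$ with $\bar\ell_\gamma=n^{-1}\sum_j b_j\log a_j$, this gives $S'(\gamma)=2\sum_i(b_i-1)a_i^2b_i(\log a_i-\bar\ell_\gamma)$. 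The goal is to show this is nonnegative via a Chebyshev inequality under the tilted measure $b_i/n$. The two factors $(b_i-1)a_i^2$ and $\log a_i-\bar\ell_\gamma$ are both nondecreasing in $a_i$, and the second has zero mean under the tilted measure, so their correlation is nonnegative. Coordinates with $a_i=0$ contribute zero, since $b_i=0$ there for $\gamma>0$.
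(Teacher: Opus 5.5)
Your symmetrization identity, the positivity of the denominator, the pair-average form $R(\gamma)=\sum w_{ij}(a_i+a_j)/\sum w_{ij}$, the formula $R'(\gamma)=\operatorname{Cov}_{w(\gamma)}(a_i+a_j,\ell_{ij})$, and your argument for $\max_i b_{\gamma,i}$ are all correct. The gap is the step you flagged yourself, and it is the heart of the lemma.

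Endpointwise comonotonicity settles the covariance only when the group has at most three distinct values, since then any two weighted pairs are ordered in both endpoints. With four or more distinct values it fails for nested pairs. Take the group $\{0,4.9,5,10\}$ at $\gamma=1$. The pair $\{10,0\}$ has the larger sum ($10$ versus $9.9$) but $\ell=\log 10\approx 2.30$. The pair $\{5,4.9\}$ has $\ell\approx 2.60$; in general, near-diagonal pairs have $\ell\approx\log a+1/\gamma$.

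Your fallback would require proving that, for every four-point set, the sum of $w_Pw_{P'}(\sigma_P-\sigma_{P'})(\ell_P-\ell_{P'})$ over its three pairings is nonnegative. The proposal gives no mechanism for this. Your strictness claim (``the pair sums are not all equal, so the covariance is positive'') also does not follow without comonotonicity.

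The missing idea is total positivity, which organizes the sum over index triples rather than pairs of pairs. Set $S_r=\sum_{i\in G}a_i^r$. The ratio is $N(\gamma)/D(\gamma)$ with $N(\gamma)=|G|S_{\gamma+2}-S_2S_\gamma$ and $D(\gamma)=|G|S_{\gamma+1}-S_1S_\gamma$. The kernel $(x,y)\mapsto S_{x+y}=\sum_i e^{x\log a_i}e^{y\log a_i}$ is totally positive of order three. By Cauchy--Binet, each $3\times3$ minor is a sum over index triples of products of two exponential Vandermonde determinants of the same sign. Take the minor with rows $x\in\{0,\gamma_1,\gamma_2\}$, $0<\gamma_1<\gamma_2$, and columns $y\in\{0,1,2\}$. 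Subtract $S_{\gamma_j}/|G|$ times the first row from row $j+1$. The minor is then a positive multiple of $D(\gamma_1)N(\gamma_2)-N(\gamma_1)D(\gamma_2)$, which gives monotonicity. Strictness comes from any triple with three distinct magnitudes, whose Cauchy--Binet term is strictly positive.

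Your argument for $\sum_i(b_{\gamma,i}-1)^2a_i^2$ also contains a false step. The derivative formula is right, but $(b_{\gamma,i}-1)a_i^2=a_i^{\gamma+2}/m_\gamma-a_i^2$ is not nondecreasing in $a_i$. Its $a$-derivative $a\bigl((\gamma+2)a^\gamma/m_\gamma-2\bigr)$ is negative whenever $b_{\gamma,i}<2/(\gamma+2)$. For $\gamma=1$ and the group $\{0.1,0.2,10\}$, it equals about $-0.0097$ at $a=0.1$ and about $-0.0377$ at $a=0.2$. So Chebyshev does not apply to your product as written.

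The repair is to split that factor. Let $\mu_i=b_{\gamma,i}/|G|$ and $\bar\ell_r=S_r'/S_r$, which is your $\bar\ell_\gamma$ at $r=\gamma$. Then your expression equals
\[
2|G|\bigl[\operatorname{Cov}_\mu(\log a,\,b_\gamma a^2)-\operatorname{Cov}_\mu(\log a,\,a^2)\bigr]
=2\frac{|G|^2S_{2\gamma+2}}{S_\gamma^2}\bigl(\bar\ell_{2\gamma+2}-\bar\ell_\gamma\bigr)-2\frac{|G|S_{\gamma+2}}{S_\gamma}\bigl(\bar\ell_{\gamma+2}-\bar\ell_\gamma\bigr).
\]
The first term dominates for two reasons:
\begin{itemize}
\item $r\mapsto\bar\ell_r$ is nondecreasing, because its derivative is the variance of $\log a_i$ under weights $\propto a_i^r$. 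Hence $\bar\ell_{2\gamma+2}-\bar\ell_\gamma\ge\bar\ell_{\gamma+2}-\bar\ell_\gamma\ge0$.
\item $|G|S_{2\gamma+2}\ge S_\gamma S_{\gamma+2}$ by Chebyshev.
\end{itemize}
This is the paper's route.
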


\begin{proof}
It suffices to treat positive $a_i$, zero entries follow by continuity. Let $m=|G|$ and $S_r=\sum_i a_i^r$. The ratio in the statement equals
\[
    \frac{mS_{\gamma+2}-S_2S_\gamma}
         {mS_{\gamma+1}-S_1S_\gamma}.
\]
For $0<\gamma_1<\gamma_2$, the moment kernel $S_{x+y}$ is totally positive. Indeed, Cauchy--Binet applied to $S_{x+y}=\sum_i e^{x\log a_i}e^{y\log a_i}$ expresses each minor as a sum of products of equally signed exponential Vandermonde determinants. Hence
\[
    \det\!
    \begin{pmatrix}
    S_0&S_1&S_2\\
    S_{\gamma_1}&S_{\gamma_1+1}&S_{\gamma_1+2}\\
    S_{\gamma_2}&S_{\gamma_2+1}&S_{\gamma_2+2}
    \end{pmatrix}\ge0.
\]
Subtracting $S_{\gamma_j}/m$ times the first row from row $j+1$ shows that this determinant is nonnegative exactly when the displayed ratio at $\gamma_2$ is at least its value at $\gamma_1$. The determinant is strict with at least three distinct magnitudes; with two distinct values the ratio is constant.

For the remaining claims,
\[
    \frac{\dd}{\dd\gamma}\log\max_i b_{\gamma,i}
    =\log(\max_i a_i)-\frac{\sum_i a_i^\gamma\log a_i}{S_\gamma}
    \ge0.
\]
Finally, set $Q_\gamma=\sum_i(b_{\gamma,i}-1)^2a_i^2$ and $\ell_r=S_r'/S_r$. Direct differentiation gives
\[
    \frac{Q_\gamma'}2
    =\frac{m^2S_{2\gamma+2}}{S_\gamma^2}
       (\ell_{2\gamma+2}-\ell_\gamma)
     -\frac{mS_{\gamma+2}}{S_\gamma}
       (\ell_{\gamma+2}-\ell_\gamma).
\]
The function $\ell_r$ is nondecreasing because $\ell_r'$ is the variance of $\log a_i$ under weights proportional to $a_i^r$. Also $mS_{2\gamma+2}\ge S_\gamma S_{\gamma+2}$ by the finite-sum Chebyshev inequality. The first coefficient and first parenthesized difference are therefore no smaller than the corresponding second quantities, proving $Q_\gamma'\ge0$.
\end{proof}

The ratio in Lemma~\ref{lem:power-exponent-tradeoff} is the first-order signal gained per unit of additional crossing mass. Because the power scores $b_{\gamma,i}$ depart from uniform routing only when the within-group signals are imbalanced, this ratio quantifies the signal imbalance exploited by the amplifier. For each realized group, its deterministic contribution to $\Delta_\gamma$ is positive exactly when the ratio exceeds the one-cell curvature threshold $L_Gs_G/2$. Under stochastic gradients, $\Delta_\gamma>0$ further requires these groupwise gains to dominate the selection-noise penalty in \eqref{eq:noisy-power-net-margin}. Increasing $\gamma$ favors larger signals and can strengthen the imbalance-driven gain, but it also tightens the uncapped condition $\eta\widehat a_i b_{\gamma,i}\le s_G$ and increases sensitivity to noise. Thus amplification is beneficial only when the signal imbalance is sufficiently strong relative to both curvature and stochastic noise.

\section*{AI Use Statement}

In this work, we used generative AI tools to assist with developing the proof of the feasible-gradient bound under stochastic noise and identifying candidate related work. Additionally, we used generative AI tools to refine the wording throughout the paper. We have reviewed all AI-assisted work. The authors independently checked the AI-assisted proof for mathematical correctness and consistency with the stated assumptions, manually verified suggested related work against the original sources, and reviewed and edited all AI-assisted wording to preserve the intended technical meaning and claims. We take responsibility for the final content of this work, including text, claims, and artifacts produced with the aid of generative AI.

\end{document}